\theoremstyle{definition} 
\newtheorem{theorem}{Theorem} 
\newtheorem{definition}{Definition}[section]
\newtheorem{lemma}{Lemma}[section] 
\def \[{\begin{equation}}
	\def \]{\end{equation}}
\newtheorem{remark}{Remark}[section]
\newtheorem{exam}{Example}[section]
\numberwithin{equation}{section}
\title{A fixed-time stable dynamical model for solving EVLCPs}
\author[a]{Yufei Wei\thanks{Email address: 707005495@qq.com.}}
\author[a]{Shiping Lin\thanks{Email address:  2660654016@qq.com. }}
\author[a]{Cairong Chen\thanks{Corresponding author. Email address: cairongchen@fjnu.edu.cn.}}
\author[b]{Dongmei Yu\thanks{Email address: yudongmei1113@163.com.}}
\author[c]{Deren Han\thanks{Email address: handr@buaa.edu.cn.}}
\affil[a]{School of Mathematics and Statistics, FJKLMAA and Center for Applied Mathematics of Fujian Province, Fujian Normal University, Fuzhou, 350117, P.R. China.}
\affil[b]{Institute for Optimization and Decision Analytics, Liaoning Technical University, Fuxin, 123000, P.R. China.}
\affil[c]{LMIB of the Ministry of Education, School of Mathematical Sciences, Beihang University, Beijing, 100191, P.R. China.}
\begin{document}
	
	\date{\today}
	\maketitle
	
	\begin{quote}
		{\bf Abstract:} A fixed-time stable dynamical system for solving the extended vertical linear complementarity problem (EVLCP) is developed. The system is based on  the reformulation of EVLCP as a special case of a new kind of generalized absolute value equations. Some properties of the new kind of generalized absolute value equations are explored which are useful for developing a fixed-time stable dynamical system for solving it. Without using any smoothing technique, we develop a dynamical system for solving the new kind of generalized absolute value equations and prove its fixed-time stability. The model is applicable for solving EVLCP. As two by-products, a new condition which guarantees the unique solvability of EVLCP and a new error bound of EVLCP are provided. Numerical results are given to demonstrate our claims.	
		
		{\small
			\medskip
			{\em 2000 Mathematics Subject Classification}. 90C33, 65H10, 90C30.
			
			\medskip
			{\em Keywords}.
			The extended vertical linear complementarity problem,  generalized absolute value equations, dynamical system, fixed-time stability, unique solvability, error bound.
		}
		
	\end{quote}

\section{Introduction}
Let $A_j\in \mathbb{R}^{n\times n}$ and $q_j\in\mathbb{R}^n (j = 0,1,2,\ldots, s, s\ge 1)$ be known matrices and vectors. The extended generalized order linear complementarity problem \cite{gosz1994} is to find a vector  $x\in \mathbb{R}^n$ such that
	\begin{equation}\label{eq:evlcp}
		\min \{A_0x +q_0, A_1x + q_1,\ldots, A_s x + q_s\} = 0,
	\end{equation}
	where $\min$ is the component minimum operator. As shown in \cite{gosz1994}, when $A_0$ is invertible, the problem \eqref{eq:evlcp} is equivalent to the following vertical linear complementarity problem (VLCP) proposed in \cite{coda1970}: to find a vector  $x\in \mathbb{R}^n$ such that
	\begin{equation}\label{eq:vlcp}
		\begin{aligned}
			w &= q+ Nx,\\
			w&\ge 0, \quad x\ge 0,\\
			x_j \prod_{i = 1}^{p_j}w_i^j &= 0 \quad (j = 1,2,\ldots,\ell),
		\end{aligned}
	\end{equation}
	where
	$$
	N = \left[\begin{array}{c}N^1\\N^2\\\vdots\\N^\ell\end{array}\right], \quad w = \left[\begin{array}{c} w^1\\w^2\\\vdots\\w^\ell\end{array}\right]\quad \text{and} \quad q = \left[\begin{array}{c} q^1\\q^2\\\vdots\\q^\ell \end{array}\right]
	$$
	with the $j$-th block $N^j$ having order $p_j \times n$, $w^j = N^j x + q^j$ and $\sum_{j=1}^{\ell} p_j = n$. In \cite{szgo1995}, the problem~\eqref{eq:evlcp} is called as the extended VLCP (EVLCP). When $s =1$, $A_0 = I$ and $q_0 = 0$, EVLCP reduces to the standard linear complementarity problem (LCP) \cite{cops1992} which has been well studied in the literature. EVLCP~\eqref{eq:evlcp} can also be reformulated as an implicit complementarity problem \cite{pang1981}. In addition, VLCP~\eqref{eq:vlcp} is equivalent to a nonlinear complementarity problem, a piecewise linear system of equations, a multiple objective programming problem, and a variational inequality problem \cite{ebie1995}.
	
EVLCP has many applications such as the nonlinear network \cite{fuku1972}, the singular control problems in bounded intervals \cite{sun1987,sun1989} and the real option problems \cite{naak2008}. VLCP also has many applications, including the generalized Leontief input-output model \cite{ebko1993} and the generalized bimatrix game \cite{gosz1996}.   For extensions of EVLCP and their theory, algorithms and applications, see \cite{isgo1993,hozq2022,scmo1995} and the references therein.
	
Over the past few decades, there have been numerous investigations on both theoretical and numerical aspects for EVLCP and VLCP. Theoretically, conditions for the existence and uniqueness of solutions to EVLCP or VLCP can be found in, e.g., \cite{szgo1995,gosz1994,mons1996,ebie1995,ebko1992,mang1979} and the references therein. The error bounds for EVLCP can be found in \cite{wazl2021,wuwa2022,zhcx2009}.  Numerically, several algorithms for solving EVLCP or VLCP are available. For example, Cottle and Dantzig \cite{coda1970} extend Lemke's algorithm \cite{lemk1965} to solve VLCP~\eqref{eq:vlcp}. Based on the so-called aggregation smoothing function, smoothing Newton methods \cite{qili1999,zhga2003} and a non-interior continuation method \cite{peli1999}  are proposed  for solving EVLCP and VLCP. Ebiefung et al. \cite{efjk2022} develop a block principal pivoting algorithm for VLCP. Mezzadri \cite{mezz2022} introduces a modulus-based formulation and develops modulus-based matrix splitting methods for VLCP. However, Mezzadri's reformulation involves some auxiliary variables.  He and Vong \cite{hevo2022}, without using any auxiliary variable, propose a new kind of modulus-based matrix splitting methods for VLCP. Yu et al. \cite{ywch2024} propose a scalable relaxation two-sweep modulus-based matrix splitting method for VLCP. The equivalent modulus-based reformulation of VLCP proposed in \cite{ywch2024} differs from that of \cite{hevo2022} while both of them are independent of any  auxiliary variable. Recently, modulus-based formulations and methods for VLCP have received increasing attention and have led to a series of research efforts; see, e.g., \cite{zzlv2023,wali2024,gzlz2025} to name just a few. Another kind of methods for solving EVLCP or VLCP are the projected-type methods \cite{mega2022,liwu2024,cays2023}. For more numerical algorithms, one is referred to \cite{mone1996} and the references therein.
	
Unlike the discrete iterative methods mentioned above, a useful way for solving EVLCP~\eqref{eq:evlcp} can be the continuous dynamical models. Though the continuous dynamical models have been used for solving many mathematical problems \cite{xlwc2024,liqq2004}, the dynamical models for solving EVLCP~\eqref{eq:evlcp} are rare. To the best of our knowledge, the neural network proposed in \cite{hozq2022} can be used to solve EVLCP~\eqref{eq:evlcp}. The model based on the  aggregation smoothing function and the equilibrium point is proved to be asymptotically stable or exponentially stable under certain conditions \cite{hozq2022}. However, in real applications, the finite-stability is more practical. The notion of finite-time stability of an equilibrium is introduced in \cite{bhbe2000} and then extended to the fixed-time stability \cite{poly2011} whose settling time is independent of initial conditions. This motivates us to develop a fixed-time stable dynamical system for solving EVLCP~\eqref{eq:evlcp}.
	
Our method is based on the reformulation of EVLCP~\eqref{eq:evlcp} as a special case of a new kind of generalized absolute value equations. Some properties of the new kind of generalized absolute value equations are explored which are useful for developing a fixed-time stable dynamical system for solving it. Without using any smoothing technique, we develop a dynamical system for solving the new kind of generalized absolute value equations and prove its fixed-time stability. The model is applicable for solving EVLCP~\eqref{eq:evlcp}. As two by-products, a new condition which guarantees the unique solvability of EVLCP~\eqref{eq:evlcp} for any $q_i$ and a new error bound of EVLCP~\eqref{eq:evlcp} are given. Numerical results are given to demonstrate the effectiveness of our model.   	
	
The rest of this paper is organized as follows. In Section~\ref{sec:pre} we introduce some results about dynamical systems. In Section~\ref{sec:dm}, we develop a new kind of generalized absolute value equations, explore its properties and develop a dynamical system for solving it. In addition, the fixed-time stability of the proposed model is proved.  The results can be applied to EVLCP~\eqref{eq:evlcp}. In Section~\ref{sec:ne}, numerical results are given to demonstrate our claims. Conclusion remarks are given in Section~\ref{sec:con}.
	
\textbf{Notations.} We use $\mathbb{R}^{n \times n}$ to denote the set of all $n \times n$ real matrices, while $\mathbb{R}^n = \mathbb{R}^{n \times 1}$ and $\mathbb{R}_+$ denotes the set of all nonnegative reals. The identity matrix of order $n$ is denoted by~$I_n$ (or denoted by $I$ if the order is clear from the context). $\textbf{1}_n$ (often simply $\textbf{1}$ when its dimension is clear from the context) is the column $n$-vector of all ones. The zero matrix of order $m$ is denoted by $O_m$. $|\cdot|$ denotes the absolute value for real scalar. $A^\top$ denotes the transpose of matrix $A$. $\langle x, y \rangle = x^\top y = \sum_{i=1}^n x_i y_i$ denotes the inner product of two vectors in $\mathbb{R}^n$, $ \|x\| = \sqrt{\langle x, x \rangle}$ denotes the Euclidean norm of vector $x$ and $\|A\| = \max \big\{ \|Ax\| : x \in \mathbb{R}^n, \|x\| = 1 \big\} $ denotes the spectral norm of matrix $A$. The function $\min\{a, b\}$ denotes the minimum of real numbers $a$ and $b$. The symbol \( \otimes \) denotes the Kronecker product. For a vector $x \in \mathbb{R}^n$, its $i$-th component is denoted by $x_{(i)}$ ($i = 1, 2, \ldots, n$). The componentwise absolute value of $x\in \mathbb{R}^n$ is given by $|x| = (|x_{(1)}|, |x_{(2)}|, \ldots, |x_{(n)}|)^\top$. $\sigma_{\min}(A)$ represents the smallest singular value of matrix $A$. Additionally, $\text{tridiag}(a, b, c)$ denotes a tridiagonal matrix with subdiagonal, main diagonal, and superdiagonal entries being $a$, $b$, and $c$, respectively. $\text{Tridiag}(A, B, C)$ represents a block tridiagonal matrix with subdiagonal, main diagonal, and superdiagonal blocks being $A$, $B$, and $C$, respectively. $\text{Pentadiag}(E, F, G, H, K)$ denotes a block pentadiagonal matrix with second subdiagonal, first subdiagonal, main diagonal, first superdiagonal, and second superdiagonal blocks being $E$, $F$, $G$, $H$, and $K$, respectively. For a matrix $A\in \mathbb{R}^{n\times n}$, $D_A$ denotes its diagonal part. For a series $\{x_i\}_{i=1}^n$, $\prod\limits_{i=1}^n x_i = x_1\cdot x_2\cdot\ldots\cdot x_n$ and $\sum\limits_{i=1}^n x_i = x_1 + x_2 + \ldots+ x_n$. ${\rm rand}(n,1)$ returns an $n$-by-$1$ column vector of uniformly distributed random numbers. ${\rm mod}(x,y)$ returns the remainder when $x$ is divided by $y$.
	
\section{Preliminaries}\label{sec:pre}
In this section, we review some results of dynamical systems which lay the foundation of our later development.
	
Consider the following autonomous differential equation
\begin{equation}\label{eq:de}
		\frac{\mathrm{d}x}{\mathrm{d}t}=f(x),
\end{equation}
where $f: \mathbb{R}^n \rightarrow \mathbb{R}^n$ is a vector-valued function. A solution of \eqref{eq:de} with $x(0) = x_0$ is denoted by $x(t; x_0)$.

\begin{definition}
Let \(f:\mathbb{R}^n\rightarrow\mathbb{R}^n\).  If there exists a constant $L > 0$ such that
		\begin{equation*}
			\|f(x)-f(y)\|\leq L\|x - y\|, \quad \forall x,y\in \mathbb{R}^n,
		\end{equation*}
then \(f\) is said to be Lipschitz continuous in $\mathbb{R}^n$ and \(L \) is called the Lipschitz constant.
\end{definition}
	
\begin{definition}[see, e.g. {\cite[p. 3]{kha1996}}]
If there exists \(x_*\in\mathbb{R}^n\) such that \(f(x_*) = 0\), then \(x_*\) is called an equilibrium point of \eqref{eq:de}.
\end{definition}
	
\begin{lemma}[{\cite[Lemma 1]{poly2011}}]\label{lem:ft}
Let \(x_*\in\mathbb{R}^n\) be an equilibrium point of \eqref{eq:de}. If there exists a continuous radially unbounded function \(V:\mathbb{R}^n\rightarrow\mathbb{R}_+\) such that
\begin{enumerate}
			\item [(i)] \(V(x) = 0 \Rightarrow x = x_*\);
			\item [(ii)] There exist \(\alpha>0\), \(\beta>0\), \(0<\kappa_1 < 1\) and  \(\kappa_2>1\) such that for any solution \(x(t;x_0)\) of~\eqref{eq:de} we have
			\begin{equation*}
				\frac{\mathrm{d}V(x(t;x_0))}{\mathrm{d}t}\leq-\alpha V(x(t;x_0))^{\kappa_1}-\beta V(x(t;x_0))^{\kappa_2}.
			\end{equation*}
\end{enumerate}
Then the equilibrium point $x_*$ of \eqref{eq:de} is globally fixed-time stable and
		\begin{equation*}T(x_0)\leq T_{\max}=\frac{1}{\alpha(1 - \kappa_1)}+\frac{1}{\beta(\kappa_2 - 1)}, \quad \forall x_0  \in\mathbb{R}^n,
		\end{equation*}
where $T: \mathbb{R}^n \rightarrow \mathbb{R}_+$ is the settling-time function.
\end{lemma}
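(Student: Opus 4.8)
The plan is to collapse the $n$-dimensional problem onto a scalar differential inequality and then show that the associated settling-time integral is finite and bounded by a constant independent of the initial state. Fix an arbitrary solution $x(t;x_0)$ of \eqref{eq:de} and set $W(t)=V(x(t;x_0))\ge 0$. Hypothesis (ii) gives the scalar inequality $\dot W(t)\le -\alpha W(t)^{\kappa_1}-\beta W(t)^{\kappa_2}$, whose right-hand side is strictly negative whenever $W(t)>0$. Thus $W$ is nonincreasing, and strictly decreasing as long as $W(t)>0$; since $V$ is continuous and radially unbounded, its sublevel sets are compact, so the trajectory remains bounded and $W(t)$ decreases monotonically to $0$. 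By (i) this forces $x(t;x_0)\to x_*$, establishing global convergence. What remains is to bound the time at which $W$ attains $0$.

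First I would separate variables. On any subinterval where $W>0$, I divide the inequality by the positive quantity $\alpha W^{\kappa_1}+\beta W^{\kappa_2}$ and integrate in $t$; the substitution $s=W(\tau)$ then yields
\begin{equation*}
t\le\int_{W(t)}^{W(0)}\frac{\mathrm{d}s}{\alpha s^{\kappa_1}+\beta s^{\kappa_2}}\le\int_{0}^{V(x_0)}\frac{\mathrm{d}s}{\alpha s^{\kappa_1}+\beta s^{\kappa_2}}.
\end{equation*}
The crucial point is that the rightmost integral converges: near $s=0$ the integrand behaves like $(\alpha s^{\kappa_1})^{-1}$, which is integrable precisely because $0<\kappa_1<1$. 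Hence $W$ reaches $0$ in finite time and the settling time satisfies $T(x_0)\le\int_{0}^{V(x_0)}(\alpha s^{\kappa_1}+\beta s^{\kappa_2})^{-1}\,\mathrm{d}s$.

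To make this estimate uniform in $x_0$, I would split the integral at $s=1$. For $0\le s\le 1$ I discard the nonnegative summand $\beta s^{\kappa_2}$ to obtain the bound $\int_{0}^{1}(\alpha s^{\kappa_1})^{-1}\,\mathrm{d}s=\frac{1}{\alpha(1-\kappa_1)}$, while for $s\ge 1$ I discard $\alpha s^{\kappa_1}$ and bound the tail by $\int_{1}^{\infty}(\beta s^{\kappa_2})^{-1}\,\mathrm{d}s=\frac{1}{\beta(\kappa_2-1)}$, the latter converging because $\kappa_2>1$. Summing the two pieces gives $T(x_0)\le\frac{1}{\alpha(1-\kappa_1)}+\frac{1}{\beta(\kappa_2-1)}=T_{\max}$ for every $x_0\in\mathbb{R}^n$, with no dependence on $\|x_0\|$ or $V(x_0)$; this is exactly the asserted fixed-time stability.

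I expect the main obstacle to be the rigorous justification of the separation-of-variables step rather than the elementary integrals at the end. Because $s^{\kappa_1}$ with $0<\kappa_1<1$ fails to be Lipschitz at the origin, the scalar comparison equation $\dot y=-\alpha y^{\kappa_1}-\beta y^{\kappa_2}$ may lack uniqueness of solutions through $0$, so one should avoid invoking a Lipschitz-based comparison lemma and instead argue directly with the inequality for $W$ on intervals where $W>0$, passing to the hitting time by continuity. Care is likewise needed to confirm that the trajectory cannot escape before $W$ reaches $0$; this is precisely where the continuity and radial unboundedness of $V$ (forcing compact sublevel sets) enter.
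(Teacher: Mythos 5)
You should first note that the paper contains no proof of this lemma at all: it is quoted directly from \cite[Lemma 1]{poly2011} and used as a black box, so your blind proof is not so much ``a different route from the paper'' as the only argument on offer. What you wrote is essentially the standard proof from the fixed-time stability literature, and it is correct: working directly with the differential inequality for $W(t)=V(x(t;x_0))$ avoids the non-uniqueness pitfall of the non-Lipschitz comparison ODE that you rightly flag (the cleanest formalization is to observe that $G(w)=\int_0^w(\alpha s^{\kappa_1}+\beta s^{\kappa_2})^{-1}\,\mathrm{d}s$ is $C^1$ on $(0,\infty)$, so $\frac{\mathrm{d}}{\mathrm{d}t}G(W(t))\le -1$ while $W>0$, which is exactly your separation of variables), and splitting $\int_0^{\infty}(\alpha s^{\kappa_1}+\beta s^{\kappa_2})^{-1}\,\mathrm{d}s$ at $s=1$ gives the uniform bound $\frac{1}{\alpha(1-\kappa_1)}+\frac{1}{\beta(\kappa_2-1)}=T_{\max}$ independently of $V(x_0)$, precisely because $\kappa_1<1$ tames the singularity at $0$ and $\kappa_2>1$ tames the tail. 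Two finishing touches would make it fully match the statement. First, phrase the settling-time conclusion contrapositively: if $W(t)>0$, then monotonicity gives $W>0$ on all of $[0,t]$, hence $t\le T_{\max}$ by your integral estimate; therefore $W\equiv 0$ on $(T_{\max},\infty)$, and condition (i) pins $x(t;x_0)=x_*$ there. Second, ``fixed-time stable'' comprises Lyapunov stability as well as uniform finite-time attraction, and your sketch only delivers the latter explicitly: to get the $\varepsilon$--$\delta$ statement, derive $V(x_*)=0$ by applying hypothesis (ii) to the constant solution $x(t;x_*)\equiv x_*$, then combine continuity of $V$, positivity of $V$ off $x_*$ (from (i) and $V\ge 0$), and the fact that $W$ is nonincreasing in the usual sublevel-set argument. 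Neither point is a flaw in what you wrote; they are the routine steps needed to close the distance between fixed-time attraction and the full claim.
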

	
\begin{lemma}[{\cite[Proposition~1]{gbgm2022}}]\label{lem:solution}
Let \(f:\mathbb{R}^n\rightarrow\mathbb{R}^n\) be a locally Lipschitz continuous vector-valued function such that
		\begin{equation*}
			f(x_*) = 0\quad\text{and}\quad\langle x - x_*, f(x)\rangle >0
		\end{equation*}
		for any \(x\in\mathbb{R}^n\setminus\{x_*\}\), where \(x_*\in\mathbb{R}^n\). Consider the following autonomous differential equation
		\begin{equation}\label{eq:ad}
			\frac{\mathrm{d}x}{\mathrm{d}t}=-\rho(x)f(x),
		\end{equation}
	where
		\begin{equation*}
			\rho(x):=\begin{cases}
				\frac{\rho_1}{\|f(x)\|^{1 - \lambda_1}}+\frac{\rho_2}{\|f(x)\|^{1 - \lambda_2}},&\text{if}~f(x)\neq0,\\
				0,&\text{if}~f(x) = 0
			\end{cases}
\end{equation*}
with \(\rho_1,\rho_2>0\), \(\lambda_1\in(0,1)\) and \(\lambda_2 > 1\). Then the right-hand side of the equation \eqref{eq:ad} is continuous for all \(x\in\mathbb{R}^n\). Moreover, for any given initial point \(x(0) = x_0\), a solution of \eqref{eq:ad} exists and is uniquely determined for   $t\ge 0$.
\end{lemma}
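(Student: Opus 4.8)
The plan is to establish two separate claims: continuity of the right-hand side $g(x):=-\rho(x)f(x)$ on all of $\mathbb{R}^n$, and then global existence and uniqueness of the initial value problem for $t\ge 0$. I would first record two elementary consequences of the hypotheses: since $\langle x-x_*,f(x)\rangle>0$ for $x\neq x_*$, necessarily $f(x)\neq 0$ for every $x\neq x_*$, so $x_*$ is the \emph{only} zero of $f$ and $\rho(x)>0$ precisely when $x\neq x_*$. Writing $\rho(x)f(x)=\rho_1\|f(x)\|^{\lambda_1-1}f(x)+\rho_2\|f(x)\|^{\lambda_2-1}f(x)$ for $x\neq x_*$, I would prove continuity in two cases. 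On the open set $\{x:f(x)\neq0\}=\mathbb{R}^n\setminus\{x_*\}$, $g$ is a composition of the continuous map $f$ with the map $y\mapsto\|y\|^{\lambda_i-1}y$, which is continuous (indeed smooth) away from the origin, hence $g$ is continuous there. At $x_*$ I would use the bound $\|g(x)\|\le\rho_1\|f(x)\|^{\lambda_1}+\rho_2\|f(x)\|^{\lambda_2}$; since $\lambda_1,\lambda_2>0$ and $f(x)\to f(x_*)=0$ as $x\to x_*$, the right-hand side tends to $0=g(x_*)$, giving continuity at $x_*$ as well.

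For existence, continuity of $g$ lets me invoke Peano's theorem to obtain a local solution through any $x_0$. To upgrade this to global existence on $[0,\infty)$, I would introduce the energy $W(t)=\tfrac12\|x(t)-x_*\|^2$ and compute $\dot W(t)=-\rho(x(t))\langle x(t)-x_*,f(x(t))\rangle\le0$, where nonnegativity of $\rho$ and the monotonicity hypothesis are used. Thus $\|x(t)-x_*\|\le\|x_0-x_*\|$ along the solution, so the trajectory remains in a fixed compact ball; the standard continuation theorem then rules out finite-time blow-up and extends the solution to all $t\ge0$.

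Uniqueness is the delicate point, because $\lambda_1\in(0,1)$ makes $g$ only H\"older---not Lipschitz---at $x_*$ (the scalar analogue $\dot y=|y|^{\lambda_1}$ already exhibits nonuniqueness), so Picard--Lindel\"of cannot be applied there. I would resolve this with a two-part argument that confines the failure of Lipschitzness to the single point $x_*$. Away from $x_*$ the map $y\mapsto\|y\|^{\lambda_i-1}y$ is $C^1$ and $f$ is locally Lipschitz, so $g$ is locally Lipschitz on $\mathbb{R}^n\setminus\{x_*\}$ and solutions are locally unique as long as they stay off $x_*$. At $x_*$ the monotonicity restores uniqueness: if a solution satisfies $x(t_0)=x_*$, then $W(t_0)=0$, $W\ge0$ and $\dot W\le0$ force $W\equiv0$ for $t\ge t_0$, i.e. $x(t)\equiv x_*$, so the constant trajectory is the unique forward solution issuing from $x_*$.

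Finally I would glue the two regimes: given two solutions from the same $x_0$, the local-Lipschitz uniqueness makes them coincide for as long as they remain away from $x_*$, and the instant either one reaches $x_*$ (if ever) both are pinned there by the energy argument; hence the solution is unique on all of $[0,\infty)$. The main obstacle is precisely this uniqueness step: the non-Lipschitz behaviour at the equilibrium must be neutralized, and the key is that the one-sided condition $\langle x-x_*,f(x)\rangle>0$ turns the energy $W$ into a strict Lyapunov function that both bounds the trajectory and prevents any escape from $x_*$, thereby isolating and defeating the only possible source of non-uniqueness.
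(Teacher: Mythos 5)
Your proof is correct, and it is worth noting that the paper itself offers no proof to compare against: Lemma~\ref{lem:solution} is imported verbatim from \cite[Proposition~1]{gbgm2022}, so your argument is a self-contained reconstruction of the cited result rather than a variant of anything in the paper. Its structure is the natural one: (i) continuity of the right-hand side, handled separately on $\mathbb{R}^n\setminus\{x_*\}$ (where the strict condition $\langle x-x_*,f(x)\rangle>0$ forces $f(x)\neq 0$, as you observe) and at $x_*$, where $\|\rho(x)f(x)\|\le\rho_1\|f(x)\|^{\lambda_1}+\rho_2\|f(x)\|^{\lambda_2}\to 0$ because $\lambda_1,\lambda_2>0$; (ii) Peano for local existence plus the decrease of $W(x)=\frac{1}{2}\|x-x_*\|^2$ along trajectories, which confines them to a compact ball and rules out finite-time blow-up, giving global forward existence; (iii) forward uniqueness by combining local Lipschitz continuity of the field off $x_*$ (since $y\mapsto\|y\|^{\lambda_i-1}y$ is $C^1$ away from the origin and $f$ is locally Lipschitz) with the energy-pinning argument that any solution touching $x_*$ stays there, glued by the supremum-of-coincidence argument. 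You correctly identify (iii) as the only delicate point and correctly diagnose that it is the monotonicity hypothesis, not regularity, that neutralizes the H\"older singularity at the equilibrium. Two small points to tighten in a full write-up: for local Lipschitzness off $x_*$, shrink the neighborhood so that its image under $f$ lies in a convex set bounded away from the origin, so the $C^1$ derivative bound on $y\mapsto\|y\|^{\lambda_i-1}y$ yields a genuine Lipschitz constant; and the gluing step is cleanest phrased via $T=\sup\{t:\ \text{the two solutions agree on }[0,t]\}$, distinguishing whether the common value at $T$ equals $x_*$ or not---both cases contradict $T<\infty$.
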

	
\section{The fixed-time stable dynamical model for EVLCP}\label{sec:dm}
	In this section, we will develop a fixed-time stable and inverse-free dynamical model for EVLCP~\eqref{eq:evlcp}. To this end, we first study a kind of new generalized absolute value equations~(NGAVE).
	
	For any given $B_0,B_i^j\in \mathbb{R}^{n \times n}$ and $b_0,b_i^j\in \mathbb{R}^{n}$ with $i=1,2,\ldots,s$ and $j=1,2,\ldots,2^{i-1}$, define $\phi_0(x)$ and $\phi_i[\cdot](x)$ as follows:
	{\small\begin{equation}\label{eq:phis}
			\begin{aligned}
				\phi_0(x) &= B_0 x + b_0, \\
				\phi_1[B_1^1,b_1^1](x) &= |B_1^1 x + b_1^1|, \\
				\phi_2[B_2^1,B_2^2,b_2^1,b_2^2](x) &= |B_2^1 x + b_2^1 + \phi_1[B_2^2,b_2^2](x)|, \\
				\phi_3[B_3^1,\ldots,B_3^4,b_3^1,\ldots,b_3^4](x) &= |B_3^1 x + b_3^1 + \phi_1[B_3^2,b_3^2](x) + \phi_2[B_3^3,B_3^4,b_3^3,b_3^4](x) |, \\
				\phi_4[B_4^1,\ldots,B_4^8,b_4^1,\ldots,b_4^8](x) &= |  B_4^1 x + b_4^1 + \phi_1[B_4^2,b_4^2](x) + \phi_2[B_4^3,B_4^4,b_4^3,b_4^4](x)
				\\&\qquad + \phi_3[B_4^5,\ldots,B_4^8,b_4^5,\ldots,b_4^8](x) | ,\\
				&\ \, \vdots   \\
				\phi_s[B_s^1,\ldots,B_s^{2^{s -1}}, b_s^1,\ldots,b_s^{2^{s -1}}](x)
				&= |B_s^1x + b_s^1 + \phi_1[B_s^2,b_s^2](x) + \phi_2[B_s^3,B_s^3,b_s^3,b_s^4](x)
				\\&\qquad+ \phi_3[B_s^5,\ldots,B_s^8,b_s^5,\ldots,b_s^8](x)
				+\ldots\\
				&\qquad +\phi_{s-1}[B_s^{2^{s-2} +1},\ldots,
				B_s^{2^{s-1}}, b_s^{2^{s-2} +1},\ldots,b_s^{2^{s-1}}](x) |.
			\end{aligned}
	\end{equation}}
	Then for any $s \ge 1$, we consider the following NGAVE
	\begin{equation}\label{eq:nngave}
		\varphi_s(x)=\phi_0(x) + \sum_{i=1}^{s} \phi_i[B_i^1, \dots, B_i^{2^{i-1}}, b_i^1, \dots, b_i^{2^{i-1}}](x) = 0.
	\end{equation}
	In order to develop a fixed-time stable dynamical model for solving NGAVE~\eqref{eq:nngave}, we first explore some  properties of  NGAVE~\eqref{eq:nngave} in the following.
	
	\begin{lemma}\label{lem:key}
		Let \(B_i^j\in \mathbb{R}^{n \times n}\) and \(b_i^j\in \mathbb{R}^{n}\) with \(i=1,2,\ldots,s\) and \(j=1,2,\ldots,2^{i-1}\). Assume that  \(\phi_i[\cdot] (i=1,2,\ldots,s) \) are defined as in \eqref{eq:phis}.  Then, for $i=1,2,\ldots,s$, we have
		\begin{equation*}
			\left\|\phi_i[B_i^1,\ldots,B_i^{2^{i-1}},b_i^1,\ldots,b_i^{2^{i-1}}](x) - \phi_i[B_i^1,\ldots,B_i^{2^{i-1}},b_i^1,\ldots,b_i^{2^{i-1}}](y)\right\| \leq  \left(\sum_{j=1}^{2^{i-1}} \|B_i^j\|\right)\|x - y\|.
		\end{equation*}
	\end{lemma}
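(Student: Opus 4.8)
The plan is to prove the bound by induction on the level $i$, the single analytic ingredient being the componentwise reverse triangle inequality $\big\| |u|-|v| \big\| \le \|u-v\|$, which is valid in the Euclidean norm because $\big| |u_{(k)}|-|v_{(k)}| \big| \le |u_{(k)}-v_{(k)}|$ for each component $k$. For the base case $i=1$ I would apply this directly to $\phi_1[B_1^1,b_1^1](x)=|B_1^1x+b_1^1|$, obtaining $\|B_1^1(x-y)\|\le\|B_1^1\|\,\|x-y\|$, which is exactly $\big(\sum_{j=1}^{2^0}\|B_1^j\|\big)\|x-y\|$.

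For the inductive step I would read off from \eqref{eq:phis} that $\phi_i$ has the nested form $\phi_i[\cdots](x)=\big|\,B_i^1x+b_i^1+\sum_{k=1}^{i-1}\phi_k[\cdots](x)\,\big|$, where the inner call $\phi_k$ is supplied precisely with the block of matrices $B_i^{2^{k-1}+1},\ldots,B_i^{2^{k}}$ and the matching vectors. Peeling off the outer absolute value with the reverse triangle inequality and then separating the linear term from the sum with the ordinary triangle inequality yields
\begin{equation*}
\|\phi_i[\cdots](x)-\phi_i[\cdots](y)\|\le\|B_i^1\|\,\|x-y\|+\sum_{k=1}^{i-1}\big\|\phi_k[\cdots](x)-\phi_k[\cdots](y)\big\|.
\end{equation*}
The induction hypothesis, applied to each $\phi_k$ with its own block of data, then bounds the $k$-th summand by $\big(\sum_{m=2^{k-1}+1}^{2^{k}}\|B_i^m\|\big)\|x-y\|$.

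The one point deserving genuine care --- and the step I expect to be the main obstacle --- is the index bookkeeping. I must verify that the blocks $\{2^{k-1}+1,\ldots,2^{k}\}$ for $k=1,\ldots,i-1$, together with the singleton $\{1\}$ attached to $B_i^1$, partition $\{1,2,\ldots,2^{i-1}\}$ with no overlap and no omission; this is the telescoping count $1+\sum_{k=1}^{i-1}2^{k-1}=2^{i-1}$. Once this is confirmed, summing the per-block estimates collapses to $\big(\sum_{j=1}^{2^{i-1}}\|B_i^j\|\big)\|x-y\|$ and closes the induction. The difficulty is organizational rather than analytic: one must keep straight which matrices are consumed by which nested instance of $\phi$, since the same symbol $B_i^m$ occupies different roles at different depths of the recursion.
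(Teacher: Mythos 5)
Your proof is correct and follows essentially the same route as the paper: induction on the level $i$, with the base case handled by the reverse triangle inequality $\bigl\| |u|-|v| \bigr\| \le \|u-v\|$ and the inductive step obtained by peeling off the outer absolute value, applying the induction hypothesis to each nested $\phi_k$ with its block of matrices $B_i^{2^{k-1}+1},\ldots,B_i^{2^k}$, and summing over the resulting partition of $\{1,\ldots,2^{i-1}\}$. The only difference is presentational: the paper peels off the inner terms one at a time in a chain of inequalities, whereas you separate them all at once with the triangle inequality, which is if anything a cleaner write-up of the same argument.
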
	
	\begin{proof}
		For $i = 1$, it follows from $\||x| - |y|\|\le \|x-y\|$ that
		\begin{equation*}
			\left\|\phi_1[B_1^1,b_1^1](x) - \phi_1[B_1^1,b_1^1](y)\right\| \leq \|B_1^1\| \|x - y\|.
		\end{equation*}
		Assume that for \(i=2,3,\ldots ,s-1\) we have
		\begin{equation*}
			\left\|\phi_i[B_i^1,\ldots,B_i^{2^{i-1}}, b_i^1,\ldots,b_i^{2^{i -1}}](x) - \phi_i[B_i^1,\ldots,B_i^{2^{i-1}}, b_i^1,\ldots,b_i^{2^{i -1}}](y)\right\| \leq \left(\sum_{j=1}^{2^{i-1}} \|B_i^j\|\right) \|x - y\|.
		\end{equation*}
		Then, for \(i=s\), we have
		\begin{align*}
			&\left\|\phi_s[B_s^1,\ldots,B_s^{2^{s -1}}, b_s^1,\ldots,b_s^{2^{s -1}}](x) - \phi_s[B_s^1,\ldots,B_s^{2^{s -1}}, b_s^1,\ldots,b_s^{2^{s -1}}](y)\right\|\\
			&\qquad = \Big\|\big|B_s^1x + b_s^1 + \phi_1[B_s^2,b_s^2](x) + \phi_2[B_s^3,B_s^4,b_s^3,b_s^4](x)
			+ \phi_3[B_s^5,\ldots,B_s^8,b_s^5,\ldots,b_s^8](x)\\
			&\qquad\qquad +\ldots+\phi_{s-1}[B_s^{2^{s-2} +1},\ldots,
			B_s^{2^{s-1}}, b_s^{2^{s-2} +1},\ldots,b_s^{2^{s-1}}](x) \big|\\
			&\qquad \qquad-\big|B_s^1y + b_s^1 + \phi_1[B_s^2,b_s^2](y) + \phi_2[B_s^3,B_s^4,b_s^3,b_s^4](y)
			+ \phi_3[B_s^5,\ldots,B_s^8,b_s^5,\ldots,b_s^8](y)\\
			&\qquad \qquad +\ldots+\phi_{s-1}[B_s^{2^{s-2} +1},\ldots,
			B_s^{2^{s-1}}, b_s^{2^{s-2} +1},\ldots,b_s^{2^{s-1}}](y) \big|\Big\|\\
			&\qquad \leq \|B_s^1\|\|x-y\|+\Big\|\phi_1[B_s^2,b_s^2](x) + \phi_2[B_s^3,B_s^4,b_s^3,b_s^4](x)
			+ \phi_3[B_s^5,\ldots,B_s^8,b_s^5,\ldots,b_s^8](x)\\
			&\qquad \qquad +\ldots+\phi_{s-1}[B_s^{2^{s-2} +1},\ldots,
			B_s^{2^{s-1}}, b_s^{2^{s-2} +1},\ldots,b_s^{2^{s-1}}](x) \\
			&\qquad \qquad-\big[\phi_1[B_s^2,b_s^2](y) + \phi_2[B_s^3,B_s^4,b_s^3,b_s^4](y)
			+ \phi_3[B_s^5,\ldots,B_s^8,b_s^5,\ldots,b_s^8](y)\\
			&\qquad \qquad +\ldots+\phi_{s-1}[B_s^{2^{s-2} +1},\ldots,
			B_s^{2^{s-1}}, b_s^{2^{s-2} +1},\ldots,b_s^{2^{s-1}}](y) \big]\Big\|\\
			&\qquad \leq \|B_s^1\|\|x-y\|+\|B_s^2\|\|x-y\|+ \Big\| \phi_2[B_s^3,B_s^4,b_s^3,b_s^4](x)
			+ \phi_3[B_s^5,\ldots,B_s^8,b_s^5,\ldots,b_s^8](x)\\
			&\qquad \qquad +\ldots+\phi_{s-1}[B_s^{2^{s-2} +1},\ldots,
			B_s^{2^{s-1}}, b_s^{2^{s-2} +1},\ldots,b_s^{2^{s-1}}](x) \\
			&\qquad \qquad-\big[\phi_2[B_s^3,B_s^4,b_s^3,b_s^4](y)
			+ \phi_3[B_s^5,\ldots,B_s^8,b_s^5,\ldots,b_s^8](y)\\
			&\qquad \qquad +\ldots+\phi_{s-1}[B_s^{2^{s-2} +1},\ldots,
			B_s^{2^{s-1}}, b_s^{2^{s-2} +1},\ldots,b_s^{2^{s-1}}](y)\big]\Big\|\\
			&\qquad \leq \left(\sum_{i=1}^{2^{s-2}}\|B_s^i\|\right)\|x-y\|+\Big\| \phi_{s-1}[B_s^{2^{s-2} +1},\ldots,B_s^{2^{s-1}}, b_s^{2^{s-2} +1},\ldots,b_s^{2^{s-1}}](x)\\
			&\qquad \qquad-\phi_{s-1}[B_s^{2^{s-2} +1},\ldots,
			B_s^{2^{s-1}}, b_s^{2^{s-2} +1},\ldots,b_s^{2^{s-1}}](y) \Big\|\\
			&\qquad \leq \left(\sum_{i=1}^{2^{s-2}}\|B_s^i\| + \sum_{i=2^{s-2} +1}^{2^{s-1}}\|B_s^i\|\right)\|x-y\|
			\\
			&\qquad= \left(\sum_{i=1}^{2^{s-1}} \|B_s^i\|\right)\|x - y\|.
		\end{align*}
		The proof is completed by induction.
	\end{proof}

	By Lemma ~\ref{lem:key}, for $\forall x,y\in \mathbb{R}^n$, we obtain that
\begin{align}\nonumber
				&\|\varphi_s(x)-\varphi_s(y)\|\\\nonumber
				&\qquad =\left\| \phi_0(x) + \sum_{i=1}^{s} \phi_i[B_i^1, \dots, B_i^{2^{i-1}}, b_i^1, \dots, b_i^{2^{i-1}}](x)\right.\\\nonumber
&\qquad\qquad \left.-\left[\phi_0(y) + \sum_{i=1}^{s} \phi_i[B_i^1, \dots, B_i^{2^{i-1}}, b_i^1, \dots, b_i^{2^{i-1}}](y)\right]\right\|\\	\label{eq:ngavee}
&\qquad \leq\left(\|B_{0}\|+\sum_{i=1}^s \sum_{j=1}^{2^{i-1}} \|B_i^j\|\right)\|x - y\|.
\end{align}
	
\begin{theorem}\label{thm:uunique}
		Assume that \(B_0\) and \(B_i^j\in \mathbb{R}^{n \times n}\) with \(i=1,2,\ldots,s\) and \(j=1,2,\ldots,2^{i-1}\).  If
		\begin{equation}\label{eq:cc}
			\sigma_{\min}(B_0) > \sum_{i=1}^s \sum_{j=1}^{2^{i-1}} \|B_i^j\|,	
		\end{equation}
then NGAVE~\eqref{eq:nngave} has a unique solution for any \(b_0,b_i^j\in \mathbb{R}^{n}.\)
\end{theorem}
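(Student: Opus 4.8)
The plan is to recast NGAVE~\eqref{eq:nngave} as a fixed-point equation for a contraction map and then invoke the Banach fixed-point theorem, which delivers existence and uniqueness simultaneously. First I would peel off the single linear term carrying $B_0$ and write $\varphi_s(x) = B_0 x + \psi(x)$, where
$$\psi(x) := b_0 + \sum_{i=1}^{s} \phi_i[B_i^1,\ldots,B_i^{2^{i-1}},b_i^1,\ldots,b_i^{2^{i-1}}](x)$$
collects the constant vector together with all the nested absolute-value terms. The point of this split is that $\psi$ carries a Lipschitz estimate in which $B_0$ does not appear, so that the condition \eqref{eq:cc} can be read directly as a smallness condition on $\psi$ relative to $B_0$.

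Next I would quantify that estimate. Since $b_0$ cancels in any difference, summing the per-layer bounds of Lemma~\ref{lem:key} over $i=1,\ldots,s$ gives
$$\|\psi(x) - \psi(y)\| \leq \left(\sum_{i=1}^{s}\sum_{j=1}^{2^{i-1}}\|B_i^j\|\right)\|x-y\| =: L\,\|x-y\|,$$
so $\psi$ is globally $L$-Lipschitz (this is exactly the content already extracted in \eqref{eq:ngavee}, minus the $\|B_0\|$ contribution). Because \eqref{eq:cc} forces $\sigma_{\min}(B_0) > L \ge 0$, in particular $\sigma_{\min}(B_0) > 0$, so $B_0$ is nonsingular with $\|B_0^{-1}\| = 1/\sigma_{\min}(B_0)$. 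Solving $\varphi_s(x)=0$ is then equivalent to solving the fixed-point equation $x = T(x)$ with $T(x) := -B_0^{-1}\psi(x)$.

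It remains to verify that $T$ is a contraction. For all $x,y\in\mathbb{R}^n$,
$$\|T(x)-T(y)\| \leq \|B_0^{-1}\|\,\|\psi(x)-\psi(y)\| \leq \frac{L}{\sigma_{\min}(B_0)}\,\|x-y\|,$$
and the factor $L/\sigma_{\min}(B_0)$ is strictly less than $1$ precisely by hypothesis \eqref{eq:cc}. Since $\mathbb{R}^n$ is complete, the Banach fixed-point theorem yields a unique fixed point $x_*$ of $T$, which is the unique solution of NGAVE~\eqref{eq:nngave}. This holds for every choice of $b_0,b_i^j$, because these enter only through $\psi$ and affect $T$ but never its contraction constant.

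I do not anticipate a genuine obstacle: the argument is a Lipschitz-perturbation-of-an-invertible-linear-map contraction. The only steps requiring a little care are assembling the single constant $L$ for the aggregate map $\psi$ from the layerwise estimates of Lemma~\ref{lem:key}, and correctly identifying $\|B_0^{-1}\| = 1/\sigma_{\min}(B_0)$ for the spectral norm; both are routine. As an alternative that sidesteps the fixed-point theorem, one could establish injectivity directly from $\sigma_{\min}(B_0)\|x-y\| \le \|B_0(x-y)\| = \|\psi(x)-\psi(y)\| \le L\|x-y\|$ and then argue surjectivity separately, but the contraction route settles both at once and is cleaner.
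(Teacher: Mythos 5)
Your proposal is correct and is essentially identical to the paper's own proof: the paper also defines the contraction map $M(x) = -B_0^{-1}\bigl(\sum_{i=1}^{s}\phi_i[B_i^1,\dots,B_i^{2^{i-1}},b_i^1,\dots,b_i^{2^{i-1}}](x) + b_0\bigr)$ (your $T$), bounds its Lipschitz constant by $\|B_0^{-1}\|\sum_{i=1}^{s}\sum_{j=1}^{2^{i-1}}\|B_i^j\|$ via Lemma~\ref{lem:key}, uses $\|B_0^{-1}\| = 1/\sigma_{\min}(B_0)$ together with \eqref{eq:cc} to conclude it is a contraction, and invokes the Banach fixed-point theorem. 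No substantive difference.
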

	
\begin{proof}
If \eqref{eq:cc} holds, $B_0$ is nonsingular. Then we can define the mapping \(M: \mathbb{R}^{n} \to \mathbb{R}^{n}\) as
		\begin{equation*}
			M(x) = -B_0^{-1} \left( \sum_{i=1}^{s} \phi_i[B_i^1, \dots, B_i^{2^{i-1}}, b_i^1, \dots, b_i^{2^{i-1}}](x) + b_0 \right).
		\end{equation*}
For \(\forall x, y \in \mathbb{R}^{n}\), we have
\begin{align*}
\|M(x) - M(y)\|
& \leq \|B_0^{-1}\| \left\| \sum_{i=1}^{s} \phi_i[B_i^1, \dots, B_i^{2^{i-1}}, b_i^1, \dots, b_i^{2^{i-1}}](x) \right.\\
&\qquad\left.- \sum_{i=1}^{s} \phi_i[B_i^1, \dots, B_i^{2^{i-1}}, b_i^1, \dots, b_i^{2^{i-1}}](y)\right\| \\
				& \leq \|B_0^{-1}\| \left( \sum_{i=1}^s\sum_{j=1}^{2^{i-1}} \|B_i^j\| \right)\|x - y\|,
\end{align*}
which together with \eqref{eq:cc} and $\|B_0^{-1}\| = 1/\sigma_{\min}(B_0)$  implies that
		\(M\) is a contraction mapping in $\mathbb{R}^n$. By the Banach fixed-point theorem, \(M\) has a unique fixed point in $\mathbb{R}^n$, which implies that NGAVE~\eqref{eq:nngave} has a unique solution for any \(b_0,b_i^j\in \mathbb{R}^{n}\).
	\end{proof}

\begin{theorem}\label{thm:e1}
		Let \(B_0,B_i^j\in \mathbb{R}^{n \times n}\) and \(b_0,b_i^j\in \mathbb{R}^{n}\) with \(i=1,2,\ldots,s\) and \(j=1,2,\ldots,2^{i-1}\). If~\eqref{eq:cc} holds, then for $\forall x\in \mathbb{R}^n$ we have
		\begin{equation}\label{eq:qf}
			(x - x_*)^\top B_{0}^\top\varphi_s(x)\geq \frac{1}{2} \|\varphi_s(x)\|^2,
		\end{equation}
		where \(x_*\) is the unique solution to NGAVE~\eqref{eq:nngave}.
	\end{theorem}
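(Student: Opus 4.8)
The plan is to use the fact that $x_*$ solves NGAVE, i.e. $\varphi_s(x_*) = 0$, to express $\varphi_s(x)$ as the sum of a linear increment and the increment of the nonlinear part, and then to reduce the claimed inequality to the norm comparison furnished by Lemma~\ref{lem:key} and condition~\eqref{eq:cc}. Concretely, I would abbreviate the nonlinear part of $\varphi_s$ by $\Psi(x) := \sum_{i=1}^{s}\phi_i[B_i^1,\dots,B_i^{2^{i-1}},b_i^1,\dots,b_i^{2^{i-1}}](x)$, so that $\varphi_s(x) = B_0 x + b_0 + \Psi(x)$. Since $\varphi_s(x_*) = 0$ gives $b_0 = -B_0 x_* - \Psi(x_*)$, subtracting yields
\begin{equation*}
\varphi_s(x) = B_0(x - x_*) + \big(\Psi(x) - \Psi(x_*)\big).
\end{equation*}

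Next I would set $u := x - x_*$ and $d := \Psi(x) - \Psi(x_*)$, so that $\varphi_s(x) = B_0 u + d$, and expand both quantities appearing in \eqref{eq:qf}. A direct computation gives $(x - x_*)^\top B_0^\top \varphi_s(x) = \|B_0 u\|^2 + (B_0 u)^\top d$ and $\|\varphi_s(x)\|^2 = \|B_0 u\|^2 + 2(B_0 u)^\top d + \|d\|^2$. Subtracting one half of the latter from the former makes the cross term $(B_0 u)^\top d$ cancel, leaving
\begin{equation*}
(x - x_*)^\top B_0^\top \varphi_s(x) - \tfrac{1}{2}\|\varphi_s(x)\|^2 = \tfrac{1}{2}\big(\|B_0 u\|^2 - \|d\|^2\big).
\end{equation*}
Thus the whole statement reduces to showing $\|B_0 u\| \ge \|d\|$.

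For this last step I would use two estimates. On one hand, $\|B_0 u\| \ge \sigma_{\min}(B_0)\|u\|$. On the other hand, writing $c := \sum_{i=1}^{s}\sum_{j=1}^{2^{i-1}}\|B_i^j\|$, the triangle inequality together with Lemma~\ref{lem:key} applied to each $\phi_i$ yields $\|d\| = \|\Psi(x) - \Psi(x_*)\| \le c\,\|u\|$, which is exactly the aggregated bound \eqref{eq:ngavee} restricted to the nonlinear part. Condition~\eqref{eq:cc}, namely $\sigma_{\min}(B_0) > c$, then gives $\|B_0 u\| \ge \sigma_{\min}(B_0)\|u\| \ge c\,\|u\| \ge \|d\|$ (the case $u = 0$ being trivial), so the right-hand side above is nonnegative and \eqref{eq:qf} follows. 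I do not expect a genuine obstacle here: the only point requiring care is the algebraic expansion, where the coefficient $\tfrac{1}{2}$ in \eqref{eq:qf} is precisely what is needed to annihilate the cross term; once that cancellation is seen, the result is an immediate consequence of the Lipschitz aggregate of Lemma~\ref{lem:key} and the singular-value gap in \eqref{eq:cc}.
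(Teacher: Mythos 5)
Your proof is correct and follows essentially the same route as the paper: the paper also uses $\varphi_s(x_*)=0$ to write $\varphi_s(x)-\varphi_s(x_*)$ as the linear increment $B_0(x-x_*)$ plus the nonlinear increment, cancels the cross term to reduce the claim to $\tfrac{1}{2}\left(\|B_0(x-x_*)\|^2-\|\Psi(x)-\Psi(x_*)\|^2\right)\ge 0$, and concludes via $\sigma_{\min}(B_0)$ and the aggregated Lipschitz bound from Lemma~\ref{lem:key} together with~\eqref{eq:cc}. The only cosmetic difference is that the paper compares the squared quantities directly (factoring $\sigma_{\min}^2(B_0)-c^2$), while you compare the norms and then square.
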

	
\begin{proof}
According to Theorem~\ref{thm:uunique}, NGAVE~\eqref{eq:nngave} has a unique solution $x_*$ whenever \eqref{eq:cc} holds. Then for \(\forall x \in \mathbb{R}^{n}\), we have
		{\footnotesize\begin{align*}
				(x -& x_*)^\top B_{0}^\top \varphi_s(x) - \frac{1}{2} \|\varphi_s(x)\|^2 \\
				&= (x - x_*)^\top B_{0}^\top \left(\varphi_s(x) - \varphi_s(x_*) \right)
				- \frac{1}{2} \|\varphi_s(x) - \varphi_s(x_*)\|^2 \\
				&= (x - x_*)^\top B_{0}^\top B_0(x - x_*)\\
				&\qquad + (x - x_*)^\top B_{0}^\top\left(\sum_{i=1}^{s} \phi_i[B_i^1, \dots, B_i^{2^{i-1}}, b_i^1, \dots, b_i^{2^{i-1}}](x) - \sum_{i=1}^{s} \phi_i[B_i^1, \dots, B_i^{2^{i-1}}, b_i^1, \dots, b_i^{2^{i-1}}](x_*)\right)\\
				&\qquad- \frac{1}{2} \left\|B_0(x - x_*) + \sum_{i=1}^{s} \phi_i[B_i^1, \dots, B_i^{2^{i-1}}, b_i^1, \dots, b_i^{2^{i-1}}](x) - \sum_{i=1}^{s} \phi_i[B_i^1, \dots, B_i^{2^{i-1}}, b_i^1, \dots, b_i^{2^{i-1}}](x_*)\right\|^2 \\
				&= \| B_0 (x - x_*)\|^2  - \frac{1}{2} \| B_0  \left(x - x_*\right) \|^2 \\
				&\qquad + (x - x_*)^\top B_0^\top \left( \sum_{i=1}^{s} \phi_i[B_i^1, \dots, B_i^{2^{i-1}}, b_i^1, \dots, b_i^{2^{i-1}}](x) - \sum_{i=1}^{s} \phi_i[B_i^1, \dots, B_i^{2^{i-1}}, b_i^1, \dots, b_i^{2^{i-1}}](x_*) \right)  \\
				&\qquad - \frac{1}{2} \left\| \sum_{i=1}^{s} \phi_i[B_i^1, \dots, B_i^{2^{i-1}}, b_i^1, \dots, b_i^{2^{i-1}}](x) - \sum_{i=1}^{s} \phi_i[B_i^1, \dots, B_i^{2^{i-1}}, b_i^1, \dots, b_i^{2^{i-1}}](x_*) \right\|^2 \\
				&\qquad - (x - x_*)^\top B_0^\top \left( \sum_{i=1}^{s} \phi_i[B_i^1, \dots, B_i^{2^{i-1}}, b_i^1, \dots, b_i^{2^{i-1}}](x) - \sum_{i=1}^{s} \phi_i[B_i^1, \dots, B_i^{2^{i-1}}, b_i^1, \dots, b_i^{2^{i-1}}](x_*) \right)\\
				&= \frac{1}{2} \| B_0 (x - x_*)\|^2 - \frac{1}{2} \left\| \sum_{i=1}^{s} \phi_i[B_i^1, \dots, B_i^{2^{i-1}}, b_i^1, \dots, b_i^{2^{i-1}}](x) - \sum_{i=1}^{s} \phi_i[B_i^1, \dots, B_i^{2^{i-1}}, b_i^1, \dots, b_i^{2^{i-1}}](x_*) \right\|^2 \\
				&\geq \frac{1}{2} \sigma_{\text{min}}^2 (B_0) \| x - x_* \|^2 \\
				&\qquad - \frac{1}{2} \left\| \sum_{i=1}^{s} \phi_i[B_i^1, \dots, B_i^{2^{i-1}}, b_i^1, \dots, b_i^{2^{i-1}}](x) - \sum_{i=1}^{s} \phi_i[B_i^1, \dots, B_i^{2^{i-1}}, b_i^1, \dots, b_i^{2^{i-1}}](x_*)\right\|^2 \\
				&\geq \frac{1}{2} \sigma_{\text{min}}^2 (B_0) \| x - x_* \|^2- \frac{1}{2} \left( \left( \sum_{i=1}^s \sum_{j=1}^{2^{i-1}} \| B_i^j \| \right) \| x - x_* \|\right)^2 \\
				&= \frac{1}{2} \left(\sigma_{\min}^2(B_0) - \left( \sum_{i=1}^s \sum_{j=1}^{2^{i-1}} \|B_i^j\| \right)^2\right) \|x - x_*\|^2\\
				&= \frac{1}{2} \left(\sigma_{\min}(B_0) - \sum_{i=1}^s \sum_{j=1}^{2^{i-1}} \|B_i^j\| \right)\left(\sigma_{\min}(B_0) + \sum_{i=1}^s \sum_{j=1}^{2^{i-1}} \|B_i^j\| \right)\|x - x_*\|^2\\
				&\geq 0.
		\end{align*}}
		The final inequality holds due to \eqref{eq:cc} and \(\|x - x_*\| \geq 0\).
	\end{proof}
	
	\begin{theorem}
		Let \(B_0,B_{ij}\in \mathbb{R}^{n \times n}\) and \(b_0,b_{ij}\in \mathbb{R}^{n}\) with \(i=1,2,\ldots,s\) and \(j=1,2,\ldots,2^{i-1}\). If~\eqref{eq:cc} holds, then for any \(x \in \mathbb{R}^{n}\)  we have
		\begin{equation}\label{eq:dvvt}
			\frac{\| \varphi_s(x) \|}{\|B_0\|+ \sum_{i=1}^s \sum_{j=1}^{2^{i-1}} \|B_i^j\|}  \leq \|x - x_*\|  \leq\frac{\| \varphi_s(x)\|}{\sigma_{\min}(B_0) - \sum_{i=1}^s \sum_{j=1}^{2^{i-1}} \|B_i^j\|} ,
		\end{equation}
		where \(x_*\) is the unique solution to NGAVE~\eqref{eq:nngave} .
	\end{theorem}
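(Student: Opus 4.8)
The plan is to split \eqref{eq:dvvt} into its two one-sided bounds, each of which follows quickly from estimates already established in the excerpt. Throughout I use that $x_*$ solves NGAVE~\eqref{eq:nngave}, so that $\varphi_s(x_*) = 0$ and hence $\varphi_s(x) = \varphi_s(x) - \varphi_s(x_*)$ for every $x \in \mathbb{R}^n$; here I write $\phi_i(x)$ as shorthand for $\phi_i[B_i^1,\dots,B_i^{2^{i-1}},b_i^1,\dots,b_i^{2^{i-1}}](x)$.

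For the lower bound I would apply the global Lipschitz estimate \eqref{eq:ngavee} with $y$ replaced by $x_*$. Since $\varphi_s(x_*) = 0$, this reads $\|\varphi_s(x)\| \le \big(\|B_0\| + \sum_{i=1}^s \sum_{j=1}^{2^{i-1}}\|B_i^j\|\big)\|x - x_*\|$, and dividing through by the strictly positive constant $\|B_0\| + \sum_{i=1}^s\sum_{j=1}^{2^{i-1}}\|B_i^j\|$ yields the left inequality of \eqref{eq:dvvt} at once.

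For the upper bound I would first decompose $\varphi_s(x) = B_0(x - x_*) + \big[\sum_{i=1}^s \phi_i(x) - \sum_{i=1}^s\phi_i(x_*)\big]$, using $\phi_0(x) - \phi_0(x_*) = B_0(x - x_*)$. The reverse triangle inequality then gives $\|\varphi_s(x)\| \ge \|B_0(x-x_*)\| - \big\|\sum_{i=1}^s \phi_i(x) - \sum_{i=1}^s\phi_i(x_*)\big\|$. The first term is bounded below by $\sigma_{\min}(B_0)\|x - x_*\|$ via the standard singular-value inequality $\|B_0 z\| \ge \sigma_{\min}(B_0)\|z\|$, while the subtracted term is bounded above by $\big(\sum_{i=1}^s\sum_{j=1}^{2^{i-1}}\|B_i^j\|\big)\|x-x_*\|$ by summing the Lipschitz estimate of Lemma~\ref{lem:key} over $i$. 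Combining these gives $\|\varphi_s(x)\| \ge \big(\sigma_{\min}(B_0) - \sum_{i=1}^s\sum_{j=1}^{2^{i-1}}\|B_i^j\|\big)\|x - x_*\|$, and since condition \eqref{eq:cc} makes this coefficient strictly positive, dividing through produces the right inequality of \eqref{eq:dvvt}.

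The argument is essentially routine given the groundwork already laid. The only point demanding a little care—the closest thing to an obstacle—is the upper bound: one must orient the reverse triangle inequality correctly and combine the singular-value lower bound on $\|B_0(x-x_*)\|$ with the Lipschitz bound on the nonlinear part in the right direction, and it is precisely condition \eqref{eq:cc} that keeps the denominator positive so that the final division is legitimate.
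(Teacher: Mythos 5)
Your proposal is correct and follows essentially the same route as the paper's own proof: the left inequality via the Lipschitz bound (which the paper re-derives inline with the triangle inequality and Lemma~\ref{lem:key} rather than citing \eqref{eq:ngavee} directly), and the right inequality via the reverse triangle inequality combined with $\|B_0(x-x_*)\|\ge\sigma_{\min}(B_0)\|x-x_*\|$ and Lemma~\ref{lem:key}, with \eqref{eq:cc} guaranteeing the denominator is positive. The only cosmetic difference is that the paper also explicitly invokes Theorem~\ref{thm:uunique} at the outset to justify the existence and uniqueness of $x_*$.
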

	
	\begin{proof}
		Once again, it follows from Theorem~\ref{thm:uunique} and \eqref{eq:cc} that NGAVE~\eqref{eq:nngave} has a unique solution $x_*$. Then for any \(x \in \mathbb{R}^{n}\)  we have
		\begin{align*}
			\| \varphi_s(x)\|&=\|\varphi_s(x) - \varphi_s(x_*)\|\\
			&= \left\| B_0(x - x_*) +  \sum_{i=1}^{s} \phi_i[B_i^1, \dots, B_i^{2^{i-1}}, b_i^1, \dots, b_i^{2^{i-1}}](x) \right.\\
			&\qquad \left.- \sum_{i=1}^{s} \phi_i[B_i^1, \dots, B_i^{2^{i-1}}, b_i^1, \dots, b_i^{2^{i-1}}](x_*) \right\| \\
			&\leq \|B_0\| \|x - x_*\| + \sum_{i=1}^{s}\left(\left( \sum_{j=1}^{2^{i-1}} \|B_i^j\| \right)\|x - x_*\| \right)\\
			&= \left( \|B_0\| + \sum_{i=1}^s \sum_{j=1}^{2^{i-1}} \|B_i^j\|\right)\|x - x_*\|,
		\end{align*}
		which together with $\|B_0\|+ \sum_{i=1}^s \sum_{j=1}^{2^{i-1}} \|B_i^j\| > 0$ implies
		\begin{align}\label{eq:dtt}
			\frac{1}{\|B_0\|+ \sum_{i=1}^s \sum_{j=1}^{2^{i-1}} \|B_i^j\|} \| \varphi_s(x) \| \leq \|x - x_*\|.
		\end{align}
		In addition,
		\begin{align*}
			\|\varphi_s(x)\|&=\|\varphi_s(x) - \varphi_s(x_*)\|\\
			&\geq\|B_0(x - x_*)\| - \left\| \sum_{i=1}^{s} \phi_i[B_i^1, \dots, B_i^{2^{i-1}}, b_i^1, \dots, b_i^{2^{i-1}}](x)\right.\\
			&\qquad \left.- \sum_{i=1}^{s} \phi_i[B_i^1, \dots, B_i^{2^{i-1}}, b_i^1, \dots, b_i^{2^{i-1}}](x_*)\right \| \\
			&\geq \sigma_{\min}(B_0)\|x - x_*\| - \sum_{i=1}^{s}\left(\left( \sum_{j=1}^{2^{i-1}} \|B_i^j\| \right)\|x - x_*\| \right) \\
			&\geq
			\left(\sigma_{\min}(B_0) - \sum_{i=1}^s \sum_{j=1}^{2^{i-1}} \|B_i^j\| \right)\|x - x_*\|,
\end{align*}
		which together with \eqref{eq:cc} implies
		\begin{align}\label{eq:b}
			\|x - x_*\|  \leq\frac{1}{\sigma_{\min}(B_0) - \sum_{i=1}^s \sum_{j=1}^{2^{i-1}} \|B_i^j\|} \| \varphi_s(x)\|.
		\end{align}
		Combine \eqref{eq:dtt} and \eqref{eq:b}, we obtain \eqref{eq:dvvt}.
	\end{proof}

	Now we are in the position to develop the following dynamical model for solving NGAVE~\eqref{eq:nngave}:
	\begin{equation}\label{eq:mr}
		\frac{{\rm d}x}{{\rm d}t} = -\rho(x) g(\gamma, x),
	\end{equation}
	where
	\begin{equation}\label{eq:mrd}
		\rho(x) =
		\begin{cases}
			\frac{\rho_1}{\|g(\gamma,x)\|^{1-\lambda_1}} + \frac{\rho_2}{\|g(\gamma,x)\|^{1-\lambda_2}}, & \text{if } x \notin \text{SOL}(g), \\
			0, & \text{if } x \in \text{SOL}(g),
		\end{cases}
	\end{equation}
	$g(\gamma, x) = \gamma B_0^\top \varphi_s(x)$ with $\varphi_s(x)$ being defined as in \eqref{eq:nngave}, $\rho_1, \rho_2, \gamma > 0$, $\lambda_1 \in (0, 1)$, and  $\lambda_2 \in (1, +\infty)$. Here,
	\begin{equation*}
		\text{SOL}(g) := \{x \in \mathbb{R}^n : B_0^\top \varphi_s(x) = 0\}.
	\end{equation*}
	
	Based on the properties of NGAVE~\eqref{eq:nngave} mentioned above, we can explore some properties of the dynamical model \eqref{eq:mr}--\eqref{eq:mrd}. Especially, we will prove that it is fixed-time stable.
	
	We first investigate the equivalence between the equilibrium point of the dynamical  model~\eqref{eq:mr}--\eqref{eq:mrd} and the solution  of NGAVE~\eqref{eq:nngave}. Similar to the proof of \cite[Theorem 3.1]{lyyh2023}, we have the following Theorem~\ref{thm:lipq}
	
	\begin{theorem}\label{thm:lipq}
		Assume that \(B_0,B_i^j\in \mathbb{R}^{n \times n}\) and \(b_0,b_i^j\in \mathbb{R}^{n}\) with \(i=1,2,\ldots,s\) and \(j=1,2,\ldots,2^{i-1}\). If \eqref{eq:cc} holds, then the dynamical model~\eqref{eq:mr}--\eqref{eq:mrd} has a unique equilibrium point
		$x_*$, which is equivalent to the unique solution to  NGAVE~\eqref{eq:nngave}.
	\end{theorem}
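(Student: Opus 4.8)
The plan is to read off the equilibrium points of the dynamical model directly from its definition and then invoke the unique solvability already established in Theorem~\ref{thm:uunique}. By the definition of an equilibrium point applied to \eqref{eq:mr}--\eqref{eq:mrd}, a point $\bar{x}$ is an equilibrium if and only if $\rho(\bar{x})\,g(\gamma,\bar{x}) = 0$. The crux of the argument is to show that the set of such points coincides exactly with $\text{SOL}(g)$, after which the nonsingularity of $B_0$ reduces $\text{SOL}(g)$ to the solution set of NGAVE~\eqref{eq:nngave}.

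To establish the first identification I would argue via the two-branch structure of $\rho$ in \eqref{eq:mrd}. If $\bar{x} \in \text{SOL}(g)$, then $\rho(\bar{x}) = 0$ by definition, so the right-hand side of \eqref{eq:mr} vanishes and $\bar{x}$ is an equilibrium. Conversely, if $\bar{x} \notin \text{SOL}(g)$, then $g(\gamma,\bar{x}) \neq 0$, hence $\|g(\gamma,\bar{x})\| > 0$ and, since $\rho_1,\rho_2 > 0$, the value $\rho(\bar{x}) = \rho_1\|g(\gamma,\bar{x})\|^{\lambda_1-1} + \rho_2\|g(\gamma,\bar{x})\|^{\lambda_2-1}$ is strictly positive; therefore $\rho(\bar{x})\,g(\gamma,\bar{x}) \neq 0$ and $\bar{x}$ is not an equilibrium. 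This shows that the equilibrium set of \eqref{eq:mr}--\eqref{eq:mrd} is precisely $\text{SOL}(g)$.

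It then remains to identify $\text{SOL}(g)$ with the solution set of NGAVE~\eqref{eq:nngave}. Since \eqref{eq:cc} holds, $B_0$ is nonsingular (as used in the proof of Theorem~\ref{thm:uunique}), and hence so is $B_0^\top$. Consequently $B_0^\top \varphi_s(x) = 0$ if and only if $\varphi_s(x) = 0$, so that $\text{SOL}(g) = \{x \in \mathbb{R}^n : \varphi_s(x) = 0\}$. By Theorem~\ref{thm:uunique}, under \eqref{eq:cc} this solution set is the singleton $\{x_*\}$, where $x_*$ is the unique solution of NGAVE~\eqref{eq:nngave}. Combining the two identifications, the dynamical model \eqref{eq:mr}--\eqref{eq:mrd} has the unique equilibrium point $x_*$, which coincides with the unique solution of NGAVE~\eqref{eq:nngave}.

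As for difficulties, the argument is essentially a bookkeeping exercise and I do not anticipate a serious obstacle. The only step that requires genuine care is the converse direction of the case analysis, namely confirming that $\rho(\bar{x})$ is \emph{strictly} positive whenever $g(\gamma,\bar{x}) \neq 0$, so that the product $\rho(\bar{x})\,g(\gamma,\bar{x})$ cannot accidentally vanish; this is exactly where the positivity of $\rho_1,\rho_2$ together with the exponent conditions $\lambda_1 \in (0,1)$ and $\lambda_2 > 1$ (which keep $\rho$ well defined and positive away from $\text{SOL}(g)$) are used.
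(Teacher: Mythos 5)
Your proof is correct and is essentially the argument the paper intends: the paper omits an explicit proof, deferring to \cite[Theorem 3.1]{lyyh2023}, and that argument is precisely your chain of identifications (equilibria of \eqref{eq:mr}--\eqref{eq:mrd} coincide with $\mathrm{SOL}(g)$ by the two-branch positivity of $\rho$, then $\mathrm{SOL}(g)$ equals the solution set of NGAVE~\eqref{eq:nngave} because \eqref{eq:cc} forces $\sigma_{\min}(B_0)>0$ and hence nonsingularity of $B_0^\top$, and finally Theorem~\ref{thm:uunique} gives uniqueness). No gaps; your care about strict positivity of $\rho$ off $\mathrm{SOL}(g)$ is exactly the point that makes the converse direction work.
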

	
	\begin{theorem}\label{thm:e2}
		The function $g(\gamma, x)$ in \eqref{eq:mr} is Lipschitz continuous on $\mathbb{R}^n$.
	\end{theorem}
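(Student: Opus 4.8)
The plan is to reduce everything to the Lipschitz estimate for $\varphi_s$ that has already been established. Recall that $g(\gamma, x) = \gamma B_0^\top \varphi_s(x)$, where $\varphi_s$ is defined in \eqref{eq:nngave}. The inequality \eqref{eq:ngavee}, obtained as a direct consequence of Lemma~\ref{lem:key}, already tells us that $\varphi_s$ itself is globally Lipschitz continuous, with
\begin{equation*}
\|\varphi_s(x) - \varphi_s(y)\| \leq \left(\|B_0\| + \sum_{i=1}^s \sum_{j=1}^{2^{i-1}} \|B_i^j\|\right)\|x - y\|, \quad \forall x, y \in \mathbb{R}^n.
\end{equation*}
So the only remaining work is to track how the factor $\gamma B_0^\top$ affects this constant.

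First I would fix arbitrary $x, y \in \mathbb{R}^n$ and write $g(\gamma, x) - g(\gamma, y) = \gamma B_0^\top(\varphi_s(x) - \varphi_s(y))$. Applying the submultiplicativity of the spectral norm and the identity $\|B_0^\top\| = \|B_0\|$ (the spectral norm is invariant under transposition), I get
\begin{equation*}
\|g(\gamma, x) - g(\gamma, y)\| \leq \gamma \|B_0\| \, \|\varphi_s(x) - \varphi_s(y)\|.
\end{equation*}
Chaining this with \eqref{eq:ngavee} yields the explicit Lipschitz constant
\begin{equation*}
L = \gamma \|B_0\| \left(\|B_0\| + \sum_{i=1}^s \sum_{j=1}^{2^{i-1}} \|B_i^j\|\right),
\end{equation*}
which is a finite positive number since $\gamma > 0$ and all the matrices are fixed, completing the argument.

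There is essentially no hard part here: the statement is a corollary of the already-proven bound on $\varphi_s$, since composing a globally Lipschitz map with the bounded linear operator $x \mapsto \gamma B_0^\top x$ preserves Lipschitz continuity. The only points worth stating explicitly are the use of $\|B_0^\top\| = \|B_0\|$ and the fact that no hypothesis such as \eqref{eq:cc} is needed for this particular claim; Lipschitz continuity of $g$ holds for arbitrary $B_0$ and $B_i^j$, which is why the theorem is stated without the invertibility condition on $B_0$. I would therefore keep the proof to these few lines rather than re-deriving the $\varphi_s$ estimate.
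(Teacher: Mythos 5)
Your proof is correct and follows essentially the same route as the paper: apply the bound \eqref{eq:ngavee} for $\varphi_s$ and compose with the linear map $x \mapsto \gamma B_0^\top x$, yielding the Lipschitz constant $\gamma \|B_0\|\left(\|B_0\| + \sum_{i=1}^s \sum_{j=1}^{2^{i-1}} \|B_i^j\|\right)$. Your added remarks (that $\|B_0^\top\| = \|B_0\|$ and that condition \eqref{eq:cc} is not needed here) are accurate but do not change the argument.
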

	
	\begin{proof}
		For $\forall x, y \in \mathbb{R}^n$, it follows from \eqref{eq:ngavee} that
		\begin{align*}
			\|g(\gamma, x) - g(\gamma, y)\|
			&= \gamma \|B_0^T(\varphi_s(x) - \varphi_s(y))\|\\
			&\le \gamma  \|B_0^T\| \left(\|B_{0}\|+\sum_{i=1}^s \sum_{j=1}^{2^{i-1}} \|B_i^j\|\right)\|x - y\|.
		\end{align*}
		Thus, the function $g(\gamma, x)$ is Lipschitz continuous on $\mathbb{R}^n$ with the Lipschitz constant being  $\gamma  \|B_0^T\| \left(\|B_{0}\|+\sum_{i=1}^s \sum_{j=1}^{2^{i-1}} \|B_i^j\|\right)$.
	\end{proof}
	Combine Theorem~\ref{thm:e1}, Lemma~\ref{lem:solution} and Theorem~\ref{thm:e2}, we obtain the following theorem.
	
	\begin{theorem}\label{thm:solutionn}
		Suppose that \(B_0,B_i^j\in \mathbb{R}^{n \times n}\) and \(b_0,b_i^j\in \mathbb{R}^{n}\) with \(i=1,2,\ldots,s\) and \(j=1,2,\ldots,2^{i-1}\). If \eqref{eq:cc} holds, then, for any given initial point $x(0) = x_0\in \mathbb{R}^n$, the  dynamical  model~\eqref{eq:mr}--\eqref{eq:mrd} has a unique solution $x(t; x_0)$  with $t \in [0, +\infty)$.
	\end{theorem}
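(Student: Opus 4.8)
The plan is to read Theorem~\ref{thm:solutionn} as a direct instance of Lemma~\ref{lem:solution} applied to the vector field $f(x):=g(\gamma,x)=\gamma B_0^\top\varphi_s(x)$, so that the whole argument reduces to checking, under the standing hypothesis \eqref{eq:cc}, that this $f$ satisfies the three requirements of that lemma: local Lipschitz continuity, the equilibrium identity $f(x_*)=0$, and the strict inner-product condition $\langle x-x_*,f(x)\rangle>0$ for $x\neq x_*$. I would also begin with a small piece of bookkeeping: since \eqref{eq:cc} forces $\sigma_{\min}(B_0)>0$, the matrix $B_0^\top$ is nonsingular, whence $g(\gamma,x)=0$ if and only if $\varphi_s(x)=0$. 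Therefore $\text{SOL}(g)=\{x:g(\gamma,x)=0\}$ and the piecewise definition of $\rho$ in \eqref{eq:mrd} coincides with the one in Lemma~\ref{lem:solution}; the parameter constraints $\rho_1,\rho_2>0$, $\lambda_1\in(0,1)$ and $\lambda_2>1$ are exactly the standing assumptions on the model.

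Next I would dispatch the three hypotheses in turn. Local Lipschitz continuity of $f$ is immediate from Theorem~\ref{thm:e2}, which in fact provides global Lipschitz continuity on $\mathbb{R}^n$. For the equilibrium identity, Theorem~\ref{thm:uunique} guarantees a unique solution $x_*$ of NGAVE~\eqref{eq:nngave} with $\varphi_s(x_*)=0$, so $f(x_*)=\gamma B_0^\top\varphi_s(x_*)=0$. For the monotonicity-type condition I would invoke Theorem~\ref{thm:e1}, which gives, for every $x\in\mathbb{R}^n$,
\begin{equation*}
	\langle x-x_*,f(x)\rangle=\gamma\,(x-x_*)^\top B_0^\top\varphi_s(x)\geq\frac{\gamma}{2}\,\|\varphi_s(x)\|^2\geq0.
\end{equation*}
With these three facts in hand, Lemma~\ref{lem:solution} yields continuity of the right-hand side of \eqref{eq:mr} on $\mathbb{R}^n$ together with global existence and uniqueness of $x(t;x_0)$ on $[0,+\infty)$ for every initial point $x_0$, which is exactly the assertion.

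The only step that is not purely mechanical --- and hence the closest thing to an obstacle --- is upgrading the nonstrict bound of Theorem~\ref{thm:e1} to the strict inequality demanded by Lemma~\ref{lem:solution}. I would resolve this not through any new estimate but by a second appeal to uniqueness: because $x_*$ is the \emph{only} zero of $\varphi_s$ (Theorem~\ref{thm:uunique}), any $x\neq x_*$ has $\varphi_s(x)\neq0$, so $\|\varphi_s(x)\|^2>0$ and the displayed quantity is strictly positive. This closes the last gap, and no further work beyond assembling Theorems~\ref{thm:uunique}, \ref{thm:e1} and \ref{thm:e2} into the template of Lemma~\ref{lem:solution} is required.
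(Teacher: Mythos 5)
Your proposal is correct and follows exactly the paper's route: the paper's entire proof is the one-line remark ``Combine Theorem~\ref{thm:e1}, Lemma~\ref{lem:solution} and Theorem~\ref{thm:e2}'', which is precisely your assembly of Lipschitz continuity, the equilibrium identity, and the inner-product condition for $f(x)=\gamma B_0^\top\varphi_s(x)$. Your write-up is in fact more careful than the paper's, since you explicitly supply the strictness of $\langle x-x_*,f(x)\rangle>0$ for $x\neq x_*$ via the uniqueness of the zero of $\varphi_s$ (Theorem~\ref{thm:uunique}) and verify that $\mathrm{SOL}(g)=\{x_*\}$, details the paper leaves implicit.
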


	Next, we analyze the fixed-time stability of the equilibrium point for the dynamical model~\eqref{eq:mr}--\eqref{eq:mrd}.
	
	\begin{theorem}
		Suppose that \(B_0,B_i^j\in \mathbb{R}^{n \times n}\) and \(b_0,b_i^j\in \mathbb{R}^{n}\) with \(i=1,2,\ldots,s\) and \(j=1,2,\ldots,2^{i-1}\). If \eqref{eq:cc} holds, then the unique equilibrium point $x_*$ of the dynamical model~\eqref{eq:mr}--\eqref{eq:mrd} is globally fixed-time stable. Concretely, for any given initial point \( x(0) = x_0\in \mathbb{R}^n\), we have
		\begin{equation}\label{eq:tmax}
			T(x_0) \leq T_{\max} = \frac{1}{c_1(1 - \kappa_1)} + \frac{1}{c_2(\kappa_2 - 1)},
		\end{equation}
		where  $c_1$, $c_2$, $\kappa_1$, and $\kappa_2$ are defined as in ~\eqref{eq:bi}
	\end{theorem}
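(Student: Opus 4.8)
\emph{Proof proposal.} The plan is to apply the fixed-time criterion of Lemma~\ref{lem:ft} to the radially unbounded Lyapunov candidate $V(x)=\tfrac12\|x-x_*\|^2$, where $x_*$ is the unique equilibrium of \eqref{eq:mr}--\eqref{eq:mrd} furnished by Theorem~\ref{thm:lipq}. Condition (i) of Lemma~\ref{lem:ft} holds trivially because $V(x)=0$ forces $x=x_*$, and $V$ is continuous and radially unbounded. Theorem~\ref{thm:solutionn} guarantees that for every initial point a unique trajectory $x(t;x_0)$ exists on $[0,+\infty)$, so $V(x(t;x_0))$ is well defined and, being the composition of the $C^1$ map $V$ with an absolutely continuous trajectory, differentiable for almost every $t$. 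If $x_0\in\mathrm{SOL}(g)$ then $x_0=x_*$ and the settling time is $0$, so it suffices to estimate $\dot V$ along trajectories with $x(t)\notin\mathrm{SOL}(g)$.

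Next I would differentiate $V$ along a trajectory, use the dynamics $\dot x=-\rho(x)g(\gamma,x)$ together with $g(\gamma,x)=\gamma B_0^\top\varphi_s(x)$, and invoke the quadratic-form estimate \eqref{eq:qf} of Theorem~\ref{thm:e1}, which gives $(x-x_*)^\top B_0^\top\varphi_s(x)\ge\tfrac12\|\varphi_s(x)\|^2$. Inserting the definition \eqref{eq:mrd} of $\rho$ then yields
\begin{equation*}
\frac{\mathrm{d}V}{\mathrm{d}t}=-\gamma\rho(x)(x-x_*)^\top B_0^\top\varphi_s(x)\le-\frac{\gamma}{2}\left(\frac{\rho_1}{\|g(\gamma,x)\|^{1-\lambda_1}}+\frac{\rho_2}{\|g(\gamma,x)\|^{1-\lambda_2}}\right)\|\varphi_s(x)\|^2 .
\end{equation*}

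The heart of the argument is to convert this bound into the Polyakov form $-c_1V^{\kappa_1}-c_2V^{\kappa_2}$. I would first use the singular-value sandwich $\gamma\sigma_{\min}(B_0)\|\varphi_s(x)\|\le\|g(\gamma,x)\|\le\gamma\|B_0\|\|\varphi_s(x)\|$ to eliminate $\|g\|$: since $1-\lambda_1>0$ an \emph{upper} bound on $\|g\|$ is applied in the first term, while $1-\lambda_2<0$ forces a \emph{lower} bound on $\|g\|$ in the second. Choosing these directions correctly is the main bookkeeping obstacle, since a wrong pairing reverses an inequality; getting it right produces a bound of the form $-c_1'\|\varphi_s(x)\|^{1+\lambda_1}-c_2'\|\varphi_s(x)\|^{1+\lambda_2}$ with explicit positive $c_1',c_2'$. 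Then I would apply the left half of the error bound \eqref{eq:dvvt}, namely $\|\varphi_s(x)\|\ge\mu\|x-x_*\|$ with $\mu:=\sigma_{\min}(B_0)-\sum_{i=1}^s\sum_{j=1}^{2^{i-1}}\|B_i^j\|>0$ by \eqref{eq:cc}; because both exponents $1+\lambda_1,1+\lambda_2$ are positive, this turns each power of $\|\varphi_s(x)\|$ into the corresponding power of $\|x-x_*\|=(2V)^{1/2}$.

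Setting $\kappa_1=\tfrac{1+\lambda_1}{2}\in(0,1)$ and $\kappa_2=\tfrac{1+\lambda_2}{2}>1$, the above collapses to $\dot V\le-c_1V^{\kappa_1}-c_2V^{\kappa_2}$ with the strictly positive constants
\begin{equation*}
c_1=\frac{2^{\kappa_1}\gamma^{\lambda_1}\rho_1\mu^{1+\lambda_1}}{2\|B_0\|^{1-\lambda_1}},\qquad c_2=\frac{2^{\kappa_2}\gamma^{\lambda_2}\rho_2\,\sigma_{\min}(B_0)^{\lambda_2-1}\mu^{1+\lambda_2}}{2},
\end{equation*}
which are exactly the quantities fixed in \eqref{eq:bi}. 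Both hypotheses of Lemma~\ref{lem:ft} are then met, so $x_*$ is globally fixed-time stable and $T(x_0)\le T_{\max}=\frac{1}{c_1(1-\kappa_1)}+\frac{1}{c_2(\kappa_2-1)}$, establishing \eqref{eq:tmax}. Beyond the inequality-direction bookkeeping flagged above, the only subtlety to watch is the almost-everywhere differentiability of $V(x(t;x_0))$, which is covered by the existence and uniqueness supplied by Theorem~\ref{thm:solutionn}.
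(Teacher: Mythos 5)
Your proposal follows the same skeleton as the paper's proof: the Lyapunov function $V(x)=\tfrac12\|x-x_*\|^2$, verification of condition (i) of Lemma~\ref{lem:ft}, the decomposition of $\tfrac{\mathrm{d}V}{\mathrm{d}t}=-\gamma\rho(x)(x-x_*)^\top B_0^\top\varphi_s(x)$ into the $\rho_1$- and $\rho_2$-terms, the insertion of \eqref{eq:qf}, and the use of the second inequality in \eqref{eq:dvvt} to pass to powers of $\|x-x_*\|$; your inequality directions are chosen correctly, and your treatment of the $\rho_1$-term (upper bound $\|g(\gamma,x)\|\le\gamma\|B_0\|\,\|\varphi_s(x)\|$) reproduces the paper's $c_1$ exactly. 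Where you genuinely diverge is the $\rho_2$-term: you lower-bound $\|g(\gamma,x)\|$ by $\gamma\sigma_{\min}(B_0)\|\varphi_s(x)\|$, whereas the paper lower-bounds it by $\tfrac{\gamma\|\varphi_s(x)\|^2}{2\|x-x_*\|}$, obtained from \eqref{eq:qf} via Cauchy--Schwarz, and only afterwards applies \eqref{eq:dvvt}. Both bounds are legitimate (under \eqref{eq:cc} one has $\sigma_{\min}(B_0)>0$), but they yield different constants: writing $\mu:=\sigma_{\min}(B_0)-\sum_{i=1}^s\sum_{j=1}^{2^{i-1}}\|B_i^j\|$, your route gives $c_2'=2^{\frac{\lambda_2-1}{2}}\rho_2\gamma^{\lambda_2}\sigma_{\min}(B_0)^{\lambda_2-1}\mu^{1+\lambda_2}$, while \eqref{eq:bi} defines $c_2=2^{\frac{1-\lambda_2}{2}}\rho_2\gamma^{\lambda_2}\mu^{2\lambda_2}$. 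So your claim that your constants are ``exactly the quantities fixed in \eqref{eq:bi}'' is false for $c_2$ (only $c_1$ matches). This mismatch does not sink the proof: since $2\sigma_{\min}(B_0)>\mu$ and $\lambda_2>1$, one has $c_2'=\bigl(\tfrac{2\sigma_{\min}(B_0)}{\mu}\bigr)^{\lambda_2-1}c_2\ge c_2$, so your differential inequality $\tfrac{\mathrm{d}V}{\mathrm{d}t}\le-c_1V^{\kappa_1}-c_2'V^{\kappa_2}$ implies the paper's $\tfrac{\mathrm{d}V}{\mathrm{d}t}\le-c_1V^{\kappa_1}-c_2V^{\kappa_2}$, and Lemma~\ref{lem:ft} then delivers \eqref{eq:tmax} with the constants of \eqref{eq:bi}; in fact your constants give a marginally sharper settling-time estimate. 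To make the argument literally establish the stated theorem you must add this one-line monotonicity observation (or restate the bound with your own $c_2'$). A final cosmetic point: the worry about almost-everywhere differentiability is unnecessary, since by Lemma~\ref{lem:solution} the right-hand side of \eqref{eq:mr} is continuous, so $t\mapsto V(x(t;x_0))$ is continuously differentiable before the trajectory reaches $x_*$.
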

	\begin{proof}
By Theorem~\ref{thm:lipq} and Theorem~\ref{thm:solutionn}, we know that the dynamical model~\eqref{eq:mr}--\eqref{eq:mrd} has a unique equilibrium point $x_*$ and  for any $x(0) = x_0 \in \mathbb{R}^n$ it has a unique solution $x = x(t; x_0)$ with $t\ge 0$.
		
		Let
		\begin{equation}\label{eq:fl}
			V(x) = \frac{1}{2} \|x - x_*\|^2.
		\end{equation}
		Then we have $V(x) \to \infty$ as $\|x - x_*\| \to \infty$ and $V(x) = 0$ if and only if $x = x_*$.
		
		For any given $x_0 \in \mathbb{R}^n \setminus \{x_*\}$, before the solution $x = x(t; x_0)$ arriving $x_*$, it follows from~\eqref{eq:mr}, \eqref{eq:mrd} and \eqref{eq:fl} that
\begin{align}\nonumber
		\frac{{\rm d} V(x)}{{\rm d}t} &= (x - x_*)^\top \frac{{\rm d}x}{{\rm d}t} \\\nonumber
		&= -\gamma\rho(x)(x - x_*)^\top B_0^\top \varphi_s(x) \\\label{ie:d1}
		&= -\gamma \rho_1 \frac{( x - x_*)^\top B_0^\top  \varphi_s(x)}{\|\gamma B_0^\top \varphi_s(x)\|^{1-\lambda_1}} - \gamma \rho_2 \frac{( x - x_*)^\top B_0^\top  \varphi_s(x) }{\|\gamma B_0^\top \varphi_s(x)\|^{1-\lambda_2}}.
	\end{align}
According to \eqref{eq:cc}, \eqref{eq:qf} and the second inequality of \eqref{eq:dvvt}, we have
\begin{align}\nonumber
		\gamma \rho_1 \frac{( x - x_*)^\top B_0^\top \varphi_s(x)}{\|\gamma B_0^\top  \varphi_s(x)\|^{1-\lambda_1}} &\geq \frac{1}{2}\gamma \rho_1 \frac{\| \varphi_s(x)\|^2 }{\|\gamma B_0^\top  \varphi_s(x)\|^{1-\lambda_1}} \\\nonumber
		&\geq \frac{1}{2}\gamma \rho_1 \frac{\| \varphi_s(x)\|^2 }{\|\gamma B_0^\top \|^{1-\lambda_1}\|\varphi_s(x)\|^{1-\lambda_1}} \\\nonumber
	    &= \frac{1}{2}\gamma \rho_1 \frac{1}{\|\gamma B_0^\top \|^{1-\lambda_1}}\|\varphi_s(x)\|^{1+\lambda_1} \\\label{ie:d2}
	    &\geq \frac{\rho_1\gamma^{\lambda_1} }{2\|B_0\|^{1-\lambda_1}} \left(\sigma_{\min}(B_0) - \sum_{i=1}^s \sum_{j=1}^{2^{i-1}} \|B_i^j\| \right)^{1+\lambda_1}\|x-x_*\|^{1+\lambda_1}
\end{align}
and
\begin{align}\nonumber
	\gamma \rho_2 \frac{( x - x_*)^\top B_0^\top \varphi_s(x)}{\|\gamma B_0^\top  \varphi_s(x)\|^{1-\lambda_2}} &\geq \frac{1}{2}\gamma \rho_2 \frac{\| \varphi_s(x)\|^2 }{\|\gamma B_0^\top  \varphi_s(x)\|^{1-\lambda_2}} \\\nonumber
	&= \frac{1}{2}\gamma \rho_2\|\varphi_s(x)\|^{2} \|\gamma B_0^\top \varphi_s(x)\|^{\lambda_2-1} \\\nonumber
	&\geq \frac{1}{2}\gamma \rho_2\|\varphi_s(x)\|^{2} \left(\frac{\gamma\|\varphi_s(x)\|^{2}}{2\|x-x_*\|}\right)^{\lambda_2-1} \\\nonumber
	&= \frac{1}{2}\gamma \rho_2\left(\frac{\gamma}{2}\right)^{\lambda_2-1}\frac{\|\varphi_s(x)\|^{2\lambda_2}}{\|x-x_*\|^{\lambda_2-1}} \\\label{ie:d3}
	&\geq \frac{\rho_2 \gamma^{\lambda_2}}{2^{\lambda_2}} \left(\sigma_{\min}(B_0) - \sum_{i=1}^s \sum_{j=1}^{2^{i-1}} \|B_i^j\| \right)^{2\lambda_2}\|x-x_*\|^{1+\lambda_2}.
\end{align}

Combine \eqref{ie:d1}, \eqref{ie:d2} and \eqref{ie:d3}, we get
\begin{align*}
	\frac{{\rm d}V(x)}{{\rm d} t} &\leq -\frac{\rho_1\gamma^{\lambda_1} }{2\|B_0\|^{1-\lambda_1}} \left(\sigma_{\min}(B_0) - \sum_{i=1}^s \sum_{j=1}^{2^{i-1}} \|B_i^j\| \right)^{1+\lambda_1}\|x-x_*\|^{1+\lambda_1} \\
	&\qquad - \frac{\rho_2 \gamma^{\lambda_2}}{2^{\lambda_2}} \left(\sigma_{\min}(B_0) - \sum_{i=1}^s \sum_{j=1}^{2^{i-1}} \|B_i^j\| \right)^{2\lambda_2}\|x-x_*\|^{1+\lambda_2}\\
	&= - \frac{2^{\frac{\lambda_1-1}{2}}\rho_1\gamma^{\lambda_1}}{\| B_0\|^{1-\lambda_1}} \left(\sigma_{\min}(B_0) - \sum_{i=1}^s \sum_{j=1}^{2^{i-1}} \|B_i^j\| \right)^{1+\lambda_1} \left(\frac{1}{2} \|x - x_*\|^2\right)^{\frac{\lambda_1+1}{2}} \\
	&\qquad - 2^{\frac{1-\lambda_2}{2}} \rho_2 \gamma^{\lambda_2} \left(\sigma_{\min}(B_0) - \sum_{i=1}^s \sum_{j=1}^{2^{i-1}} \|B_i^j\| \right)^{2\lambda_2} \left(\frac{1}{2} \|x - x_*\|^2\right)^{\frac{1+\lambda_2}{2}} \\
	&= -c_1 V(x)^{\kappa_1} - c_2 V(x)^{\kappa_2},
\end{align*}
where
\begin{equation}\label{eq:bi}
	\begin{aligned}
		c_1 &= \frac{2^{\frac{\lambda_1-1}{2}}\rho_1\gamma^{\lambda_1}}{\| B_0\|^{1-\lambda_1}} \left(\sigma_{\min}(B_0) - \sum_{i=1}^s \sum_{j=1}^{2^{i-1}} \|B_i^j\| \right)^{1+\lambda_1} > 0, \quad \kappa_1 = \frac{\lambda_1 + 1}{2} \in (0.5, 1), \\
		c_2 &= 2^{\frac{1-\lambda_2}{2}} \rho_2 \gamma^{\lambda_2} \left(\sigma_{\min}(B_0) - \sum_{i=1}^s \sum_{j=1}^{2^{i-1}} \|B_i^j\| \right)^{2\lambda_2} > 0, \quad \kappa_2 = \frac{\lambda_2 + 1}{2} \in (1, +\infty).
	\end{aligned}
\end{equation}
The proof is completed by Lemma~\ref{lem:ft}.
	\end{proof}
	
	\begin{remark}
		For $s\ge 2$, by $\min\{\alpha,\beta\} = \frac{\alpha + \beta - |\alpha - \beta|}{2}$ and $\min\{\alpha,\beta,\gamma\} = \min\{\alpha, \min\{\beta,\gamma\}\}$, EVLCP~\eqref{eq:evlcp} can be equivalently transformed into a special case of NGAVE~\eqref{eq:nngave} \cite{hevo2022}. For example, for $s = 2$, EVLCP~\eqref{eq:evlcp} is equivalent to
		\begin{align*}
			(2\Omega A_0 &+ A_1 + A_2)x - |(A_1 - A_2)x + q_1 - q_2| \\
			&- \left|(2\Omega A_0 - A_1 -A_2)x + |(A_1 - A_2)x + q_1 -q_2 | + 2\Omega q_0 - q_1 - q_2\right|  + 2\Omega q_0 + q_1 + q_2 = 0,
		\end{align*}
		where $\Omega \in \mathbb{R}^{n\times n}$ is a positive diagonal matrix. Then we have \( B_0 = -(2\Omega A_{0} + A_{1} + A_{2}) \), \( B_1^1 =B_2^2= A_{1} - A_{2} \), \( B_2^1 = 2\Omega A_{0} - A_{1} - A_{2} \),  $b_0 = -(2\Omega q_0 + q_1 + q_2), b_1^1 = b_2^2 = q_1 - q_2$ and $b_2^1 = 2\Omega q_0 -q_1 - q_2$. The dynamical model \eqref{eq:mr}--\eqref{eq:mrd} can be used to solve EVLCP~\eqref{eq:evlcp} with $s = 2$ and  the condition \eqref{eq:cc} becomes
\begin{equation}\label{ie:cons2}
			\sigma_{\min} (-(2\Omega A_{0} + A_{1} + A_{2}))> 2\|A_1 - A_2\| + \|2\Omega A_{0} - A_{1} - A_{2}\|.	
		\end{equation}
	
For $s = 3$, EVLCP~\eqref{eq:evlcp} can be transformed into
\begin{align*}
		\frac{1}{8} \Bigg\{
		& -(4\Omega A_0 + 2A_1 + A_2 + A_3)x - (4\Omega q_0 + 2q_1 + q_2 + q_3)
		+ \big| (A_2 - A_3)x + (q_2 - q_3) \big| \\
		& + \Big| (2A_1 - A_2 - A_3)x + (2q_1 - q_2 - q_3)
		+ \big| (A_2 - A_3)x + (q_2 - q_3) \big| \Big| \\
		& + \bigg| (4\Omega A_0 - 2A_1 - A_2 - A_3)x + (4\Omega q_0 - 2q_1 - q_2 - q_3)
		+ \big| (A_2 - A_3)x + (q_2 - q_3) \big| \\
		&\quad + \Big| (2A_1 - A_2 - A_3)x + (2q_1 - q_2 - q_3)
		+ \big| (A_2 - A_3)x + (q_2 - q_3) \big| \Big| \bigg| \Bigg\} = 0,
	\end{align*}
		where $\Omega \in \mathbb{R}^{n \times n}$ is a positive matrix. Then we have
		\begin{align*}
			B_0 &= -(4\Omega A_0 + 2A_1 + A_2 + A_3 ), b_0 = -(4\Omega q_0 + 2q_1+q_2+q_3),\\
			B_1^1 &= B_2^2 = B_3^2 = B_3^4 = A_2 - A_3,  b_1^1 = b_2^2 = b_3^2 = b_3^4 = q_2 - q_3,\\
			B_2^1 &=  B_3^3 = 2A_1 - A_2 - A_3,  b_2^1 = b_3^3 = 2q_1 - q_2 - q_3,\\
			B_3^1 &= 4\Omega A_0 - 2A_1 - A_2 - A_3,  e = 4\Omega q_0 - 2q_1 - q_2 - q_3.
		\end{align*}
		The dynamical model \eqref{eq:mr}--\eqref{eq:mrd} can be used to solve EVLCP~\eqref{eq:evlcp} with $s=3$ and the condition \eqref{eq:cc} becomes
\begin{equation}\label{ie:cons3}
		\sigma_{\min}\left( -(4\Omega A_0 + 2A_1 + A_2 + A_3) \right) > 4\|A_2 - A_3\| + 2\|2A_1 - A_2 - A_3\| + \|4\Omega A_0 - 2A_1 - A_2 - A_3\|.
\end{equation}
For $s=4$, EVLCP~\eqref{eq:evlcp} can be reduced to
{\small
	\begin{align*}
		\frac{1}{16} \Bigg\{
		& -(8\Omega A_0 + 4A_1 + 2A_2 + A_3 + A_4)x - (8\Omega q_0 + 4q_1 + 2q_2 + q_3 + q_4)
		+ \big| (A_3 - A_4)x + (q_3 - q_4) \big| \\
		& + \Big| (2A_2 - A_3 - A_4)x + (2q_2 - q_3 - q_4)
		+ \big| (A_3 - A_4)x + (q_3 - q_4) \big| \Big| \\
		& + \bigg| (4A_1 - 2A_2 - A_3 - A_4)x + (4q_1 - 2q_2 - q_3 - q_4)
		+ \big| (A_3 - A_4)x + (q_3 - q_4) \big| \\
		&\quad + \Big| (2A_2 - A_3 - A_4)x + (2q_2 - q_3 - q_4)
		+ \big| (A_3 - A_4)x + (q_3 - q_4) \big| \Big| \bigg| \\
		& + \Bigg[ \Bigg| (8\Omega A_0 - 4A_1 - 2A_2 - A_3 - A_4)x + (8\Omega q_0 - 4q_1 - 2q_2 - q_3 - q_4)
		+ \big| (A_3 - A_4)x + (q_3 - q_4) \big| \\
		&\quad + \Big| (2A_2 - A_3 - A_4)x + (2q_2 - q_3 - q_4)
		+ \big| (A_3 - A_4)x + (q_3 - q_4) \big| \Big| \\
		&\quad + \bigg| (4A_1 - 2A_2 - A_3 - A_4)x + (4q_1 - 2q_2 - q_3 - q_4)
		+ \big| (A_3 - A_4)x + (q_3 - q_4) \big| \\
		&\quad\quad + \Big| (2A_2 - A_3 - A_4)x + (2q_2 - q_3 - q_4)
		+ \big| (A_3 - A_4)x + (q_3 - q_4) \big| \Big| \bigg| \Bigg| \Bigg] \Bigg\} = 0,
	\end{align*}}
		where $\Omega \in \mathbb{R}^{n \times n}$ is a positive matrix. Then we have
		\begin{align*}
			B_0 &= -(8\Omega A_0 + 4A_1 + 2A_2 + A_3 +  A_4), b_0 = -(\Omega q_0 +4q_1 + 2q_2 + q_3 + q_4),\\
			B_1^1 &= B_2^2 = B_3^2 = B_3^4 = B_4^2 = B_4^4 = B_4^6 = B_4^8 = A_3 - A_4, \\
			b_1^1 &= b_2^2 = b_3^2 = b_3^4 = b_4^2 = b_4^4 = b_4^6 = b_4^8 = q_3 - q_4,\\
			B_2^1 &= B_3^3 = B_4^3 = B_4^7= 2A_2 - A_3 - A_4,  b_2^1 = b_3^3 = b_4^3 = b_4^7 = 2q_2 - q_3 - q_4,\\
			B_3^1 &= B_4^5 = 4A_1 - 2A_2 - A_3 - A_4, b_3^1 = b_4^5 = 4q_1 - 2q_2 - q_3 - q_4,\\
			B_4^1 &= 8\Omega A_0 - 4A_1 - 2A_2 - A_3 - A_4,  b_4^1 = 8\Omega q_0 - 4q_1 - 2q_2 - q_3 - q_4.
		\end{align*}
		The dynamical model \eqref{eq:mr}--\eqref{eq:mrd} can be used to solve EVLCP~\eqref{eq:evlcp} with $s=4$ and the condition \eqref{eq:cc} becomes
		{\small\begin{align}\nonumber
				\sigma_{\min}&\left( -(8\Omega A_0 + 4A_1 + 2A_2 + A_3 +  A_4) \right) \\\label{ie:cons4}
				&> 8\|A_3 - A_4\| + 4\|2A_2 - A_3 - A_4\| + 2\|4A_1 - 2A_2 - A_3 - A_4\|+ \|8\Omega A_0 - 4A_1 - 2A_2 - A_3- A_4\|.
		\end{align}}
	\end{remark}

\section{Numerical simulations}\label{sec:ne}
	In this section, three numerical examples are given to demonstrate the effectiveness of the dynamical system \eqref{eq:mr}--\eqref{eq:mrd} for solving EVLCP~\eqref{eq:evlcp}. We are interested in the continuous dynamical system and, as a comparison, the neural network proposed in \cite{hozq2022} is adjusted to solve EVLCP~\eqref{eq:evlcp}. All computations are done in MATLAB and the built-in function `ode45' is used to solve the corresponding ordinary differential equations. The integrating range is defined by ${\rm tspan} = [0~t_f]$. For our model, we set $\rho_1 = 200$, $\rho_2 = 100$, $\lambda_1 = 0.5$, $\lambda_2 = 1.5$, $\gamma=100$ and $t_f = T_{\max}$. For the neural network model proposed in \cite{hozq2022}, we set $\tau = \gamma$, $\alpha = 0.0001$ and  $t_f = 0.03$. For all examples,  the initial point is $x_0 = 2\cdot{\rm rand}(n,1) - 1$ with the random seed being $40$. The tested methods are terminated if $t> t_f$ or
$$
		\text{RES} := \left\| \min \left\{ A_0 x + q_0, A_{1} x + q_{1}, A_{2} x + q_{2}, \ldots, A_{s} x + q_{s} \right\} \right\|_{2} \leq  10^{-5}.
$$
In the following numerical results, $T_{\max}$ (which is defined by \eqref{eq:tmax} and only used for our model), RES, the elapsed CPU time in seconds (denoted by ``CPU'') and RERR are reported, where
$$
\text{RERR} := \| x - x_*\|/\|x_*\|
$$
with $x_*$ being the exact solution of EVLCP and $x$ the approximate solution.

	\begin{exam}\label{exams2}
		For $s = 2$, consider EVLCP~\eqref{eq:evlcp} with
		$$
		A_0 = \text{tridiag}\left(0.1,5,-0.2\right)\in \mathbb{R}^{n\times n}, A_1 = I_m \otimes S_1 + \theta I_n, A_{2} = A_1 + S_2\otimes I_m,
		$$
where  $S_{1} = \text{tridiag}(-1.5, 4, -0.5)\in \mathbb{R}^{m\times m}$  and $S_{2} = \text{tridiag}(-1.5, 0, -0.5) \in \mathbb{R}^{m \times m}$. We set $q_{i} = w_{i} - A_{i} x_{*}$ $(i =0, 1, 2)$ with
$$(x_{*})_{(i)}  = \begin{cases} 1, &\text{if} \quad {\rm mod}(i,2) = 1;\\
-0.5,& \text{otherwise}.
\end{cases}$$
Here,
\begin{align*}
(\omega_0)_{(i)} &= \begin{cases} 0, &\text{if} \quad {\rm mod}(i,3) = 0;\\
1,& \text{otherwise},
\end{cases}\\
\omega_1 & = \textbf{1} - \omega_0, \omega_2 = \omega_0 + \omega_1
\end{align*}
with $\textbf{1}$ being the column $n$-vector of all ones.

For this example, we set $\Omega = \frac{1}{2}(D_{A_1} + D_{A_2})$. In our test, we find that \eqref{ie:cons2} holds and thus $x_*$ is the unique solution of this EVLCP. Numerical results for Example~\ref{exams2} with $\theta=2$ are reported in Table~\ref{tab:exams2}. It follows from Table~\ref{tab:exams2} that our model outperforms the model proposed in \cite{hozq2022} in terms of CPU, RES and RERR. Furthermore, we use Figure~\ref{fig:s2} to intuitively demonstrate our claims. Moreover, from Figure~\ref{fig:s2}, we can find that $T_{\max}$ is still conservative. The same phenomenon can be observed in the following Figure~\ref{fig:s3} and Figure~\ref{fig:s4}. For $\theta = 5$, we have similar results which are not reported here in order to save space.
		
\begin{table}[H]
	\centering
	\caption{Numerical results for Example~\ref{exams2} with $\theta = 2$.}\label{tab:exams2}
	\begin{tabular}{ccccccc}
		\toprule
		& $ m $ & $20$ & $40$ & $60$ & $80$ \\ \midrule
		\textbf{Our model} & $T_{\max}$ & 9.0787e-04 & 9.2808e-04 & 9.3215e-04 & 9.3361e-04 \\
		& CPU & 0.1043 & 1.7063 & 11.0343 & 75.4454 \\
		& RES & 1.0000e-05 & 1.0000e-05 & 1.0000e-05 & 1.0000e-05 \\
		& RERR & 1.1989e-07 & 5.7528e-08 & 4.0100e-08 & 2.8856e-08 \\ \midrule
		\textbf{The model in \cite{hozq2022}} & CPU & 3.2565 & 67.6535 & 1389.2136 & 3974.6103 \\
		& RES & 9.5719e-03 & 7.9650e-03 & 3.9306e-03 & 2.8570e-03 \\
		& RERR & 8.3107e-05 & 3.4515e-05 & 1.1359e-05 & 6.1957e-06 \\ \bottomrule
	\end{tabular}
\end{table}
	
		\begin{figure}[H]
			\centering
			\begin{minipage}{0.45\linewidth}
				\centering
				\includegraphics[width=\linewidth]{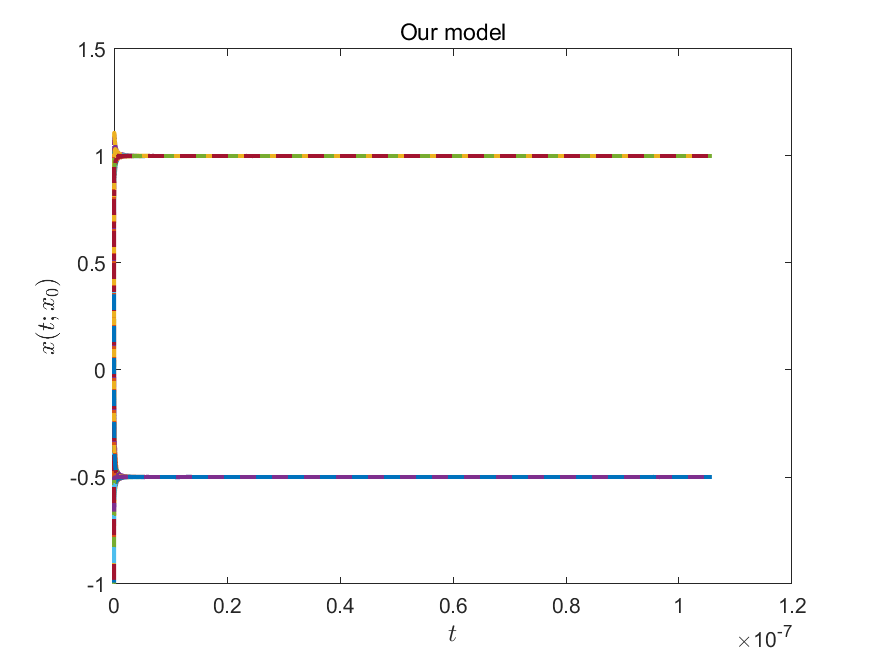}
			\end{minipage} 
			\begin{minipage}{0.45\linewidth}
				\centering
				\includegraphics[width=\linewidth]{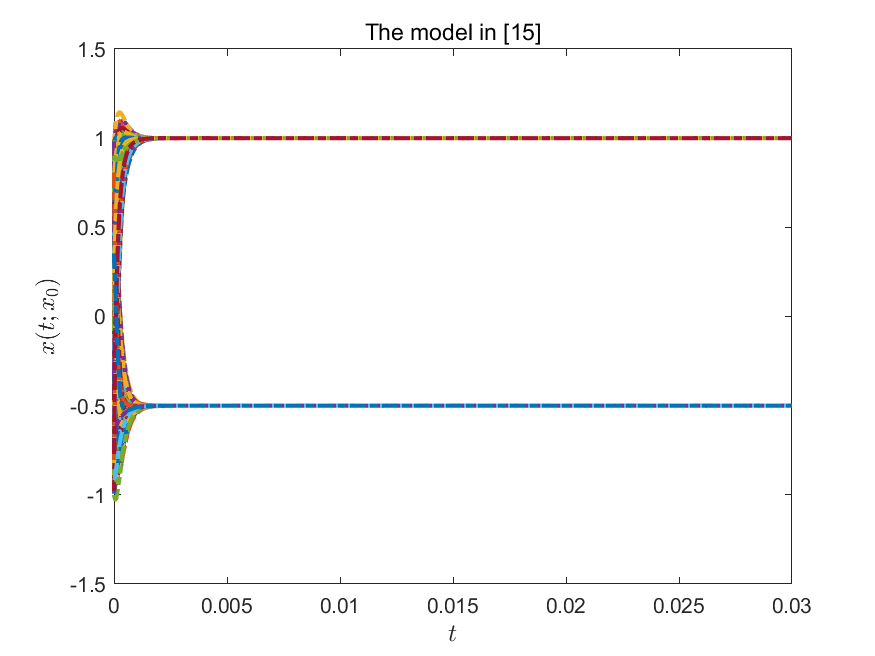}
			\end{minipage} 
			
			\centering
			\begin{minipage}{0.45\linewidth}
				\centering
				\includegraphics[width=\linewidth]{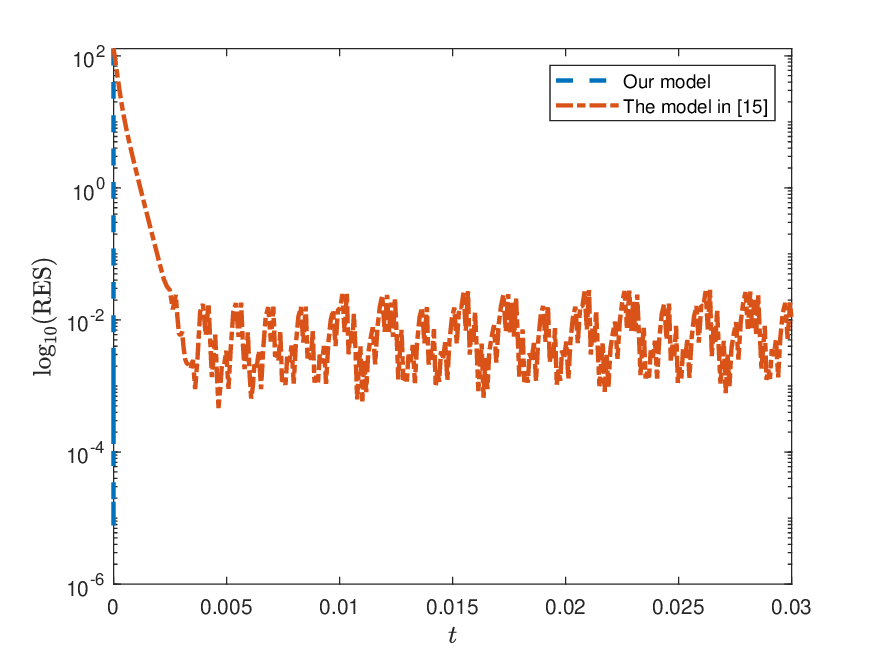}
			\end{minipage} 
			\begin{minipage}{0.45\linewidth}
				\centering
				\includegraphics[width=\linewidth]{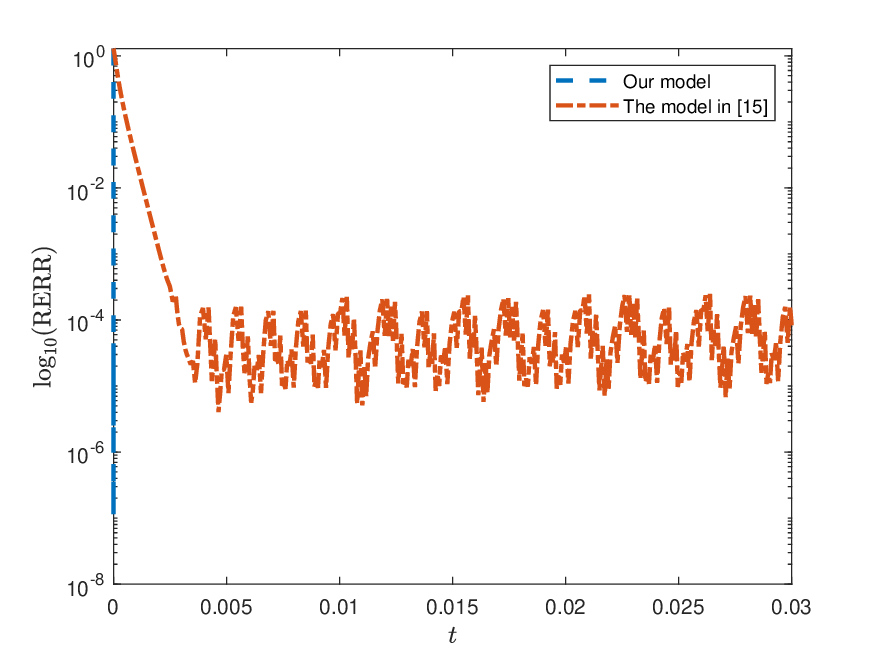}
			\end{minipage}
			\caption{Transient behaviors of $x(t;x_0)$ and curves of RES and RERR for Example~\ref{exams2} with $m = 20$ and $\theta=2$. }\label{fig:s2}
		\end{figure}

\end{exam}

\begin{exam}\label{exams3}
For $s = 3$, consider EVLCP~\eqref{eq:evlcp} with
\begin{align*}
A_0 &= I_n, q_0 = 0,\\
A_1 &= \text{Tridiag}\left(-0.3 I_{m}, S_{1}, -0.2 I_{m}\right),\\
A_2&= \text{Pentadiag} \left(-0.4 I_{m}, O_m, S_{2}, O_m, -0.5 I_{m}\right),\\
A_3&= \text{Pentadiag} \left(-0.5 I_{m}, O_m, S_{2}^\top, O_m, -1.5 I_{m}\right),
\end{align*}
where
$$
S_1 = \text{tridiag}(-0.2, 10, -0.3), S_2 = \text{tridiag}(-0.5, 5.25, -0.4).
$$
In addition, we set $q_{i} = w_{i} - A_{i} x_{*}$ $(i =1, 2, 3)$ with
\begin{equation}\label{eq:zstar}
(x_{*})_{(i)}  = \begin{cases} 1, &\text{if} \quad {\rm mod}(i,2) = 1;\\
0,& \text{otherwise}.
\end{cases}
\end{equation}
Here,
\begin{align*}
(\omega_1)_{(i)} &= \begin{cases} 1-(x_*)_{(i)}, &\text{if} \quad i > \frac{n}{2}-1;\\
1,& \text{otherwise},
\end{cases}\\
(\omega_2)_{(i)} &= \begin{cases} 1-(x_*)_{(i)}, &\text{if} \quad i < \frac{n}{2} + 1;\\
1 - (\omega_1)_{(i)}& \text{otherwise},
\end{cases} \\
\omega_3 &= \omega_1 + \omega_2.
\end{align*}
For this example, we set $\Omega = \frac{2D_{A_{1}} + D_{A_{2}} + D_{A_{3}}}{4}$. In our test, we find that \eqref{ie:cons3} holds and thus $x_*$ is the unique solution of this EVLCP. Numerical results for Example~\ref{exams3} are reported in Table~\ref{tab:exams3}. Similar to Example~\ref{exams2}, Table~\ref{tab:exams3} also tells us that our model is better than the model proposed in \cite{hozq2022} in terms of CPU, RES and RERR, which is intuitively shown in Figure~\ref{fig:s3}.
		
\begin{table}[H]
	\centering
	\caption{Numerical results for Example~\ref{exams3}.}\label{tab:exams3}
	\begin{tabular}{ccccccc}
		\toprule
		& $ m $ & $20$  & $40$  & $60$ & $80$ \\ \midrule
		\textbf{Our model} & $T_{\max}$ & 3.2679e-04 & 3.4262e-04 & 3.4597e-04 & 3.4721e-04 \\
		& CPU & 0.1817 & 3.2085 & 21.6039 & 149.5318 \\
		& RES & 1.0000e-05 & 1.0000e-05 & 1.0000e-05 & 1.0000e-05 \\
		& RERR & 6.9939e-08 & 3.4515e-08 & 5.6533e-08 & 3.8273e-08 \\ \midrule
		\textbf{The model in \cite{hozq2022}} & CPU & 8.1135 & 192.4065 & 1957.9431 & 7880.0633 \\
		& RES & 3.0372e-04 & 6.1105e-04 & 1.0058e-03 & 1.2282e-03 \\
		& RERR & 2.1325e-05 & 2.1579e-05 & 2.1685e-05 & 2.1706e-05 \\ \bottomrule
	\end{tabular}
\end{table}

\begin{figure}[H]
			\centering
			\begin{minipage}{0.45\linewidth}
				\centering
				\includegraphics[width=\linewidth]{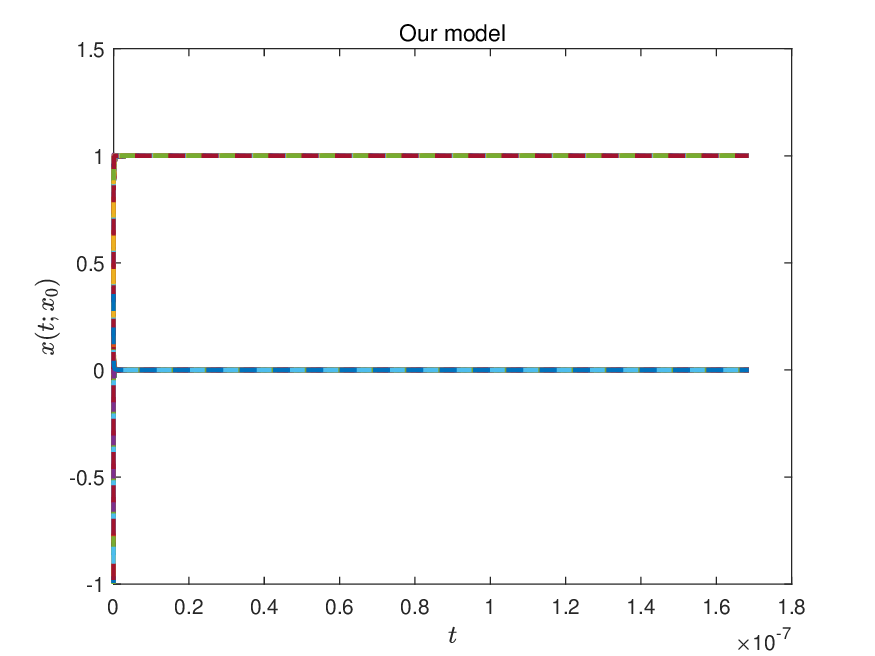}
			\end{minipage} 
			\begin{minipage}{0.45\linewidth}
				\centering
				\includegraphics[width=\linewidth]{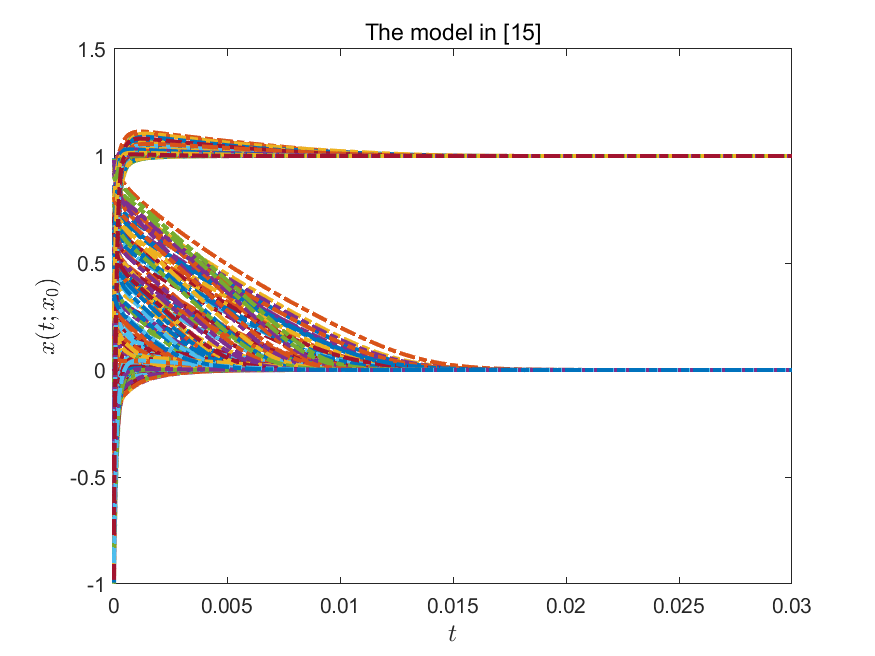}
			\end{minipage} 
			
			\centering
			\begin{minipage}{0.45\linewidth}
				\centering
				\includegraphics[width=\linewidth]{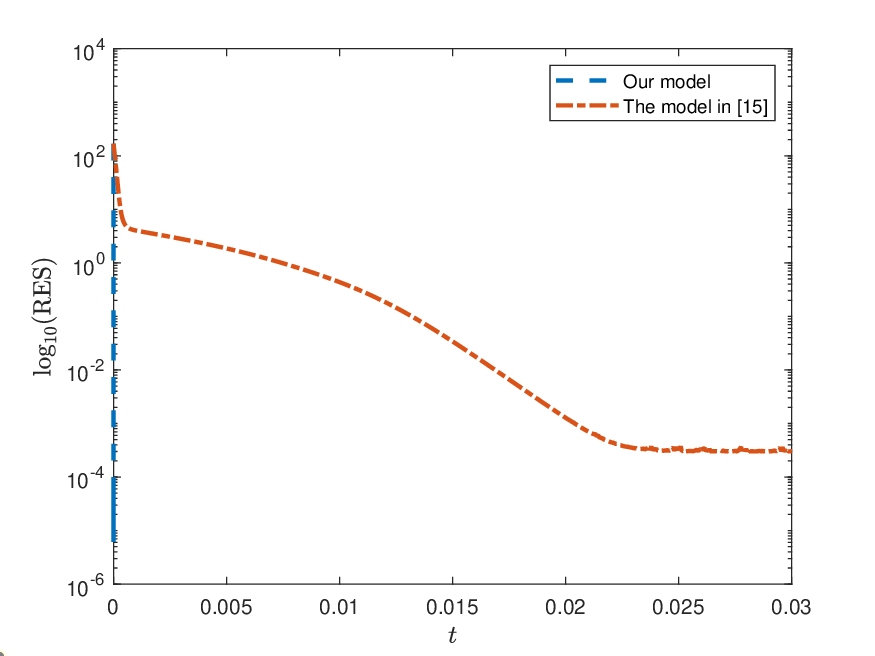}
			\end{minipage} 
			\begin{minipage}{0.45\linewidth}
				\centering
				\includegraphics[width=\linewidth]{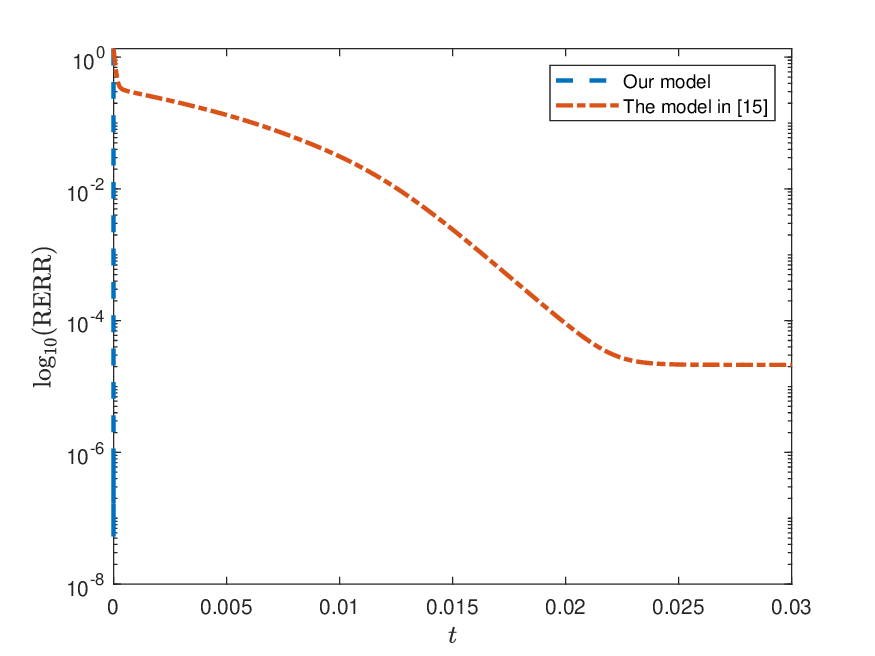}
			\end{minipage}
			\caption{Transient behaviors of $x(t;x_0)$ and curves of RES and RERR for Example~\ref{exams3} with $m = 20$.}\label{fig:s3}
		\end{figure}
		
	\end{exam}

\begin{exam}\label{exams4}
For $s = 4$, consider EVLCP~\eqref{eq:evlcp} with $A_i$ and $q_i$ ($i = 0,1,2,3$) being defined as in Example~\ref{exams3}. In addition, we set $q_4 = w_4 - A_4x_*$ with
$$
A_4 = \text{Pentadiag} \left(-0.2 I_{m}, O_m, S_{3}, O_m, -0.3 I_{m}\right), S_3 = \text{tridiag}(-0.3, 4.8, -0.2)\\
$$
and $w_4 = {\rm rand}(n,1)$. For this example, let $\Omega = \frac{4D_{A_{1}} + 2D_{A_{2}} + D_{A_{3}} + D_{A_{4}}}{8}$. In our test, we find that~\eqref{ie:cons4} holds and thus the EVLCP has the unique solution $x_*$ as defined in \eqref{eq:zstar}. Numerical results for this example are reported in Table~\ref{tab:exams4}, from which we can conclude that our model is superior to the model proposed in \cite{hozq2022} in terms of CPU, RES and RERR. In addition, Figure~\ref{fig:s4} also intuitively demonstrates our claims.

		\begin{table}[H]
	\centering
	\caption{Numerical results for Example~\ref{exams4}.}\label{tab:exams4}
	\begin{tabular}{ccccccc}
		\toprule
		& $ m $ & $20$  & $40$  & $60$ & $80$ \\ \midrule
		\textbf{Our model} & $T_{\max}$ & 2.9045e-04 & 3.1380e-04 & 3.1887e-04 & 3.2073e-04 \\
		& CPU & 0.0277 & 0.0702 & 0.1427 & 0.3906 \\
		& RES & 1.0000e-05 & 1.0000e-05 & 1.0000e-05 & 1.0000e-05 \\
		& RERR & 1.7007e-07 & 8.2353e-08 & 2.8986e-08 & 3.0347e-08 \\ \midrule
		\textbf{The model in \cite{hozq2022}} & CPU & 11.5183 & 246.0174 & 2503.7128 & 12549.4281 \\
		& RES & 3.0140e-04 & 6.1137e-04 & 9.2796e-04 & 1.2289e-03 \\
		& RERR & 2.1312e-05 & 2.1576e-05 & 2.1665e-05 & 2.1705e-05 \\ \bottomrule
	\end{tabular}
\end{table}

\begin{figure}[H]
			\centering
			\begin{minipage}{0.45\linewidth}
				\centering
				\includegraphics[width=\linewidth]{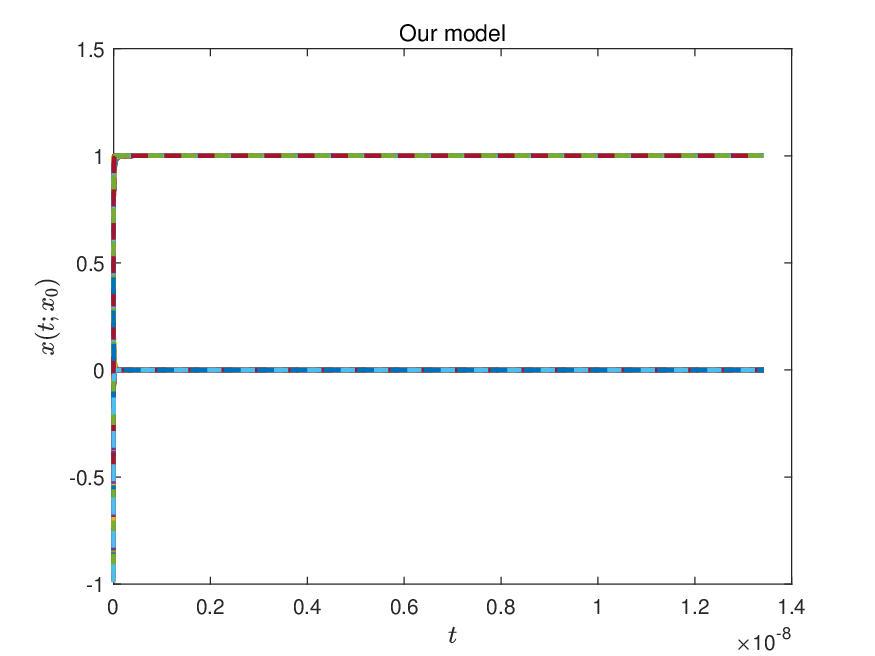}
			\end{minipage} 
			\begin{minipage}{0.45\linewidth}
				\centering
				\includegraphics[width=\linewidth]{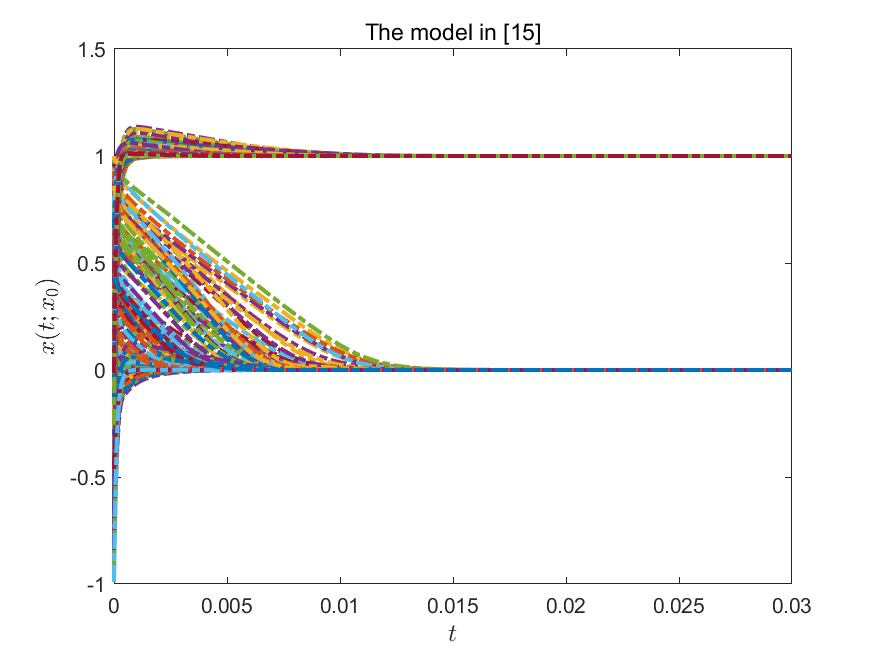}
			\end{minipage} 
			
			\centering
			\begin{minipage}{0.45\linewidth}
				\centering
				\includegraphics[width=\linewidth]{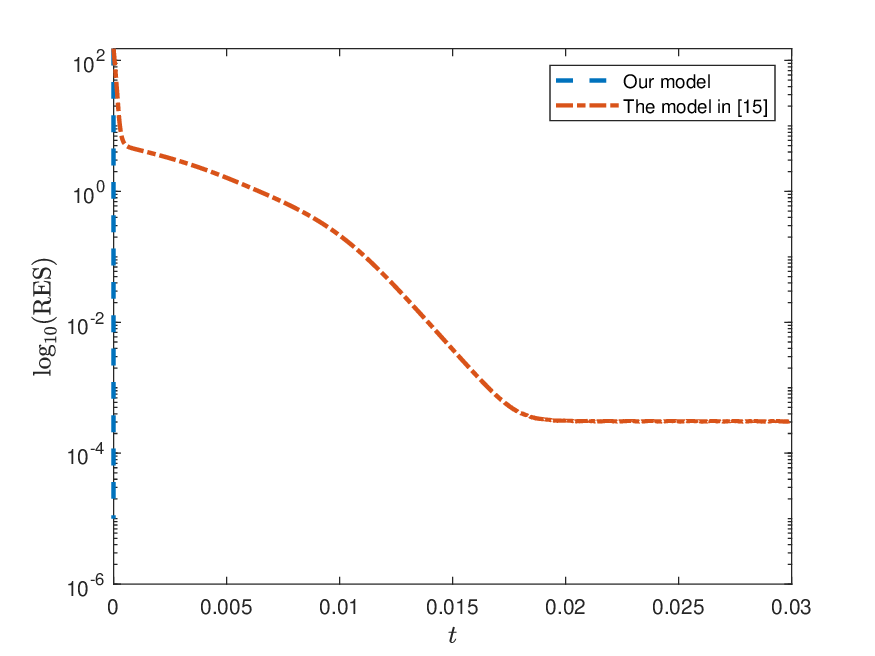}
			\end{minipage} 
			\begin{minipage}{0.45\linewidth}
				\centering
				\includegraphics[width=\linewidth]{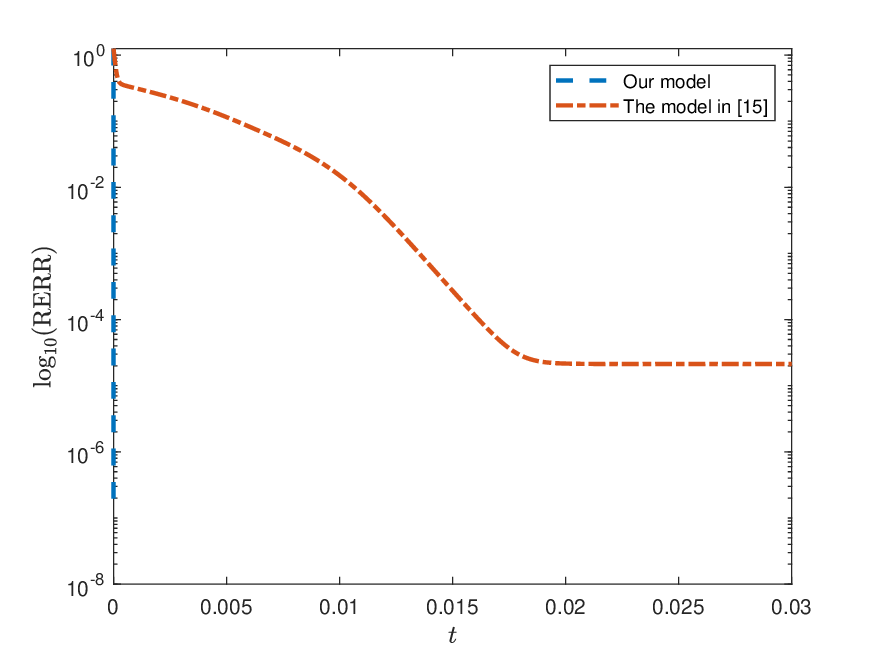}
			\end{minipage}
			\caption{Transient behaviors of $x(t;x_0)$ and curves of RES and RERR for Example~\ref{exams4} with $m = 20$.}\label{fig:s4}
		\end{figure}
		
	\end{exam}
		
\section{Conclusions}\label{sec:con}
A new kind of generalized absolute value equations (NGAVE) is developed and explored in this paper. An error bound and a bound of the residual for the NGAVE are given, which lay the foundation of developing a fixe-time stable dynamical system for solving the NGAVE. In addition, a sufficient condition which guarantees the unique solvability of the NGAVE is also given. As an application, the proposed theory and model are used to solve the extended vertical linear complementarity problem (EVLCP), which is a significant mathematical model in engineering and scientific computing. For solving EVLCP, the neural network proposed in \cite{hozq2022} always performs no better than our model in all our tests, including many not reported here.
	
	
	
\section*{Funding}
The research of Cairong Chen is supported by the Natural Science Foundation of Fujian Province (No. 2025J01673) and the Fujian Alliance of Mathematics
(No. 2023SXLMQN03).  The research of Dongmei Yu is supported by the National Natural Science Foundation of China (No. 12201275), the Ministry of Education in China of Humanities and Social Science Project (No. 21YJCZH204) and the Natural Science Foundation of Liaoning Province (No. 2020-MS-301). The research of Deren Han is supported by the National Natural Science Foundation of China (No. 12131004) and the Ministry of Science and Technology of China (No. 2021YFA1003600).
	
	\section*{Data availability}
	Data sharing not applicable to this paper as no datasets were generated or analyzed during the current study.
	
	\section*{Declarations}
	\subsection*{Competing interests}
	The authors declare no competing interests.
	
	\subsection*{Ethics approval and consent to participate}
	Not Applicable.

	\bibliography{cjcmsample}
	
\end{document}